\def\blfootnote{\gdef\@thefnmark{}\@footnotetext}
\newtheorem{theorem}{Theorem}[section]
\newtheorem{lemma}[theorem]{Lemma}
\newtheorem{proposition}[theorem]{Proposition}
\theoremstyle{remark}
\newtheorem{remark}[theorem]{Remark}
\theoremstyle{Acknowledgments}
\theoremstyle{definition}
\numberwithin{equation}{section}
 \def\Z{{\mathbb{Z}}}
\def\mod{{\rm Mod}}
\def\hmod{{\rm HMod}}
\begin{document}
 \blfootnote{\textup{2000} \textit{Mathematics Subject Classification}:
57M99, 20F38}
\blfootnote{\textit{Keywords}:
Mapping class groups, Lefschetz fibrations}
\newenvironment{prooff}{\medskip \par \noindent {\it Proof}\ }{\hfill
$\square$ \medskip \par}
    \def\sqr#1#2{{\vcenter{\hrule height.#2pt
        \hbox{\vrule width.#2pt height#1pt \kern#1pt
            \vrule width.#2pt}\hrule height.#2pt}}}
    \def\square{\mathchoice\sqr67\sqr67\sqr{2.1}6\sqr{1.5}6}
\def\pf#1{\medskip \par \noindent {\it #1.}\ }
\def\endpf{\hfill $\square$ \medskip \par}
\def\demo#1{\medskip \par \noindent {\it #1.}\ }
\def\enddemo{\medskip \par}
\def\qed{~\hfill$\square$}

 \title[The Number of Singular Fibers in Hyperelliptic Lefschetz Fibrations]
{The Number of Singular Fibers in Hyperelliptic Lefschetz Fibrations}

\author[T{\"{u}}l\.{i}n Altun{\"{o}}z]{T{\"{u}}l\.{i}n Altun{\"{o}}z}

 \address{Department of Mathematics, Middle East Technical University,
  Ankara, Turkey}
 \email{atulin@metu.edu.tr}

%\subjclass{57M99, 20F38}
 %\date{\today}
%\keywords{Mapping class groups, Lefschetz fibrations, Bounded cohomology}
%\thanks{* Supported by T\"UBA/GEB\.IP}

\begin{abstract}
We consider complex surfaces, viewed as smooth $4$-dimensional manifolds, that admit hyperelliptic Lefschetz fibrations over the $2$-sphere. In this paper, we show that the minimal number of singular fibers of such fibrations is equal to $2g+4$ for even $g\geq4$. For odd $g\geq7$, we show that the number is greater than or equal to $2g+6$.
Moreover, we discuss the minimal number of singular fibers in all hyperelliptic Lefschetz fibrations over the $2$-sphere as well.
\end{abstract}

 \maketitle
%\tableofcontents
  \setcounter{secnumdepth}{2}
 \setcounter{section}{0}

\section{Introduction}
Donaldson and Gompf's results (\cite{Don.1}, \cite{Don.2}, \cite{Go} and \cite{GS}) give the relation between symplectic $4$-manifolds and Lefschetz fibrations, which are a fibering of a $4$-manifold by surfaces, with a finite number of singularities of a prescribed type. Donaldson proved that every symplectic $4$-manifold admits a Lefschetz pencil, which can be blown up at its base points to obtain a Lefschetz fibration. On the other hand, Gompf proved that any $4$-manifold admitting a Lefschetz fibration carries a symplectic
structure. The isomorphism class of a Lefschetz fibration is determined by its global monodromy. This relation provides a combinatorial way to understand any symplectic $4$-manifold via its monodromy, whenever it exists.\par
A Lefschetz fibration admits certain singular fibers associated to its monodromy. The results on the number of singular fibers of a Lefschetz fibration give us important information about the total space. It is well known that the number of singular fibers in a Lefschetz fibration cannot be arbitrary. A natural question to ask is what the minimal number of singular fibers in Lefschetz fibrations is. \par
Let $M_{g,h}$ denote the minimal number of singular fibers in all nontrivial relatively minimal Lefschetz fibrations of fiber genus $g$ and base genus $h$. Even though the exact value of $M_{g,h}$ for $h\geq1$ is almost known (except the numbers $M_{g,1}$ for $g\geq3$ and $M_{2,2}$) \cite{Ha,Ko,Ks,M,StY}, this question is still open when $h=0$ and $g\geq3$. It was proved that $M_{g,0}\leq 2g+4$ when $g$ is even and  $M_{g,0}\leq 2g+10$ when $g$ is odd~\cite{c,dmp,k1}. It is known that $M_{2,0}=7$ by Xiao's construction~\cite{X}. Recently, a relation among seven positive Dehn twists in the mapping class group of genus-$2$ surface was found by Baykur and Korkmaz~\cite{bk}. They also constructed an interesting relation consisting of $12$ positive Dehn twists along simple closed curves which are invariant under a hyperelliptic involution $\iota$ in the mapping class group of genus-$3$ surface. Moreover, they showed that the number of singular fibers in all genus-$3$ hyperelliptic Lefschetz fibrations over the $2$-sphere is greater than or equal to  $12$.\par
Let $N_{g}$ denote the minimal number of singular fibers in 
all genus-$g$  hyperelliptic Lefschetz fibrations over the $2$-sphere having at least one singular fiber. It follows from the result of Baykur and Korkmaz that $N_{3}=12$. For $g\geq4$, it is known that $N_{g}\leq2g+4$ (respectively $N_{g}\leq 8g+4$) when $g$ is even (respectively when $g$ is odd). (Here, $8g+4$ comes from the hyperelliptic relation.)\par
Let $M_{g}$ denote the minimal number of singular fibers in 
all genus-$g$  hyperelliptic Lefschetz fibrations on a complex surface over the $2$-sphere having at least one singular fiber. Here, by a complex surface we mean a compact connected complex analytic manifold of complex dimension $2$, considered as a smooth $4$-dimensional manifold.\par
Our aim in this paper is to estimate the numbers $N_{g}$ and $M_{g}$ for $g\geq4$. For the number $M_{g}$, we have the following results:
\begin{theorem}\label{thm1}
For all even $g\geq4$, $M_{g}=2g+4$.
\end{theorem}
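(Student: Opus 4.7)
The plan is to establish both the upper bound $M_g\le 2g+4$ and the matching lower bound $M_g\ge 2g+4$ for even $g\ge 4$.

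For the upper bound, I would invoke the positive factorization of the identity in the hyperelliptic mapping class group $\hmod(\S_g)$ of length $2g+4$ obtained by Cadavid~\cite{c} and Korkmaz~\cite{k1}. This factorization produces a genus-$g$ hyperelliptic Lefschetz fibration over $S^{2}$ with exactly $2g+4$ singular fibers. The total space can be realized (through the hyperelliptic double-cover description) as the desingularization of a double cover of a Hirzebruch surface branched along an algebraic curve of bidegree $(2,2g+2)$; hence it carries a K\"ahler structure and in particular is a complex surface. This gives $M_g\le 2g+4$.

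For the lower bound, let $f\colon X\to S^{2}$ be a genus-$g$ hyperelliptic Lefschetz fibration on a complex surface $X$, and write $N=n+\sum_{h=1}^{g/2}s_h$, where $n$ denotes the number of non-separating singular fibers and $s_h$ the number of separating singular fibers whose vanishing cycle bounds a genus-$h$ subsurface. Two identities are the backbone of the argument, namely $e(X)=4-4g+N$ together with Endo's signature formula
\[
\sigma(X)=-\frac{g+1}{2g+1}\,n+\sum_{h=1}^{g/2}\left(\frac{4h(g-h)}{2g+1}-1\right)s_h.
\]
I would combine these with the complex-analytic constraint that, since $X$ is a complex surface carrying a hyperelliptic Lefschetz fibration, $X$ is birationally a double cover of some rational ruled surface $\S_k$ branched along an algebraic curve $B\subset \S_k$ of bidegree $(2,2g+2)$; the separating singular fibers of $f$ then correspond to nodes of $B$ of specified local type, and the non-separating ones to simple vertical tangencies of $B$ with the ruling. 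The algebraicity of $B$ together with the adjunction/genus formula (and its arithmetic-genus bound $p_a(B)=2g+1$) yields a linear inequality on the tuple $(n,s_1,\ldots,s_{g/2})$ which forces $N\ge 2g+4$.

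The main obstacle is pinning down an inequality that is sharp at $N=2g+4$. Coarse topological inputs such as $c_1^{2}(X)\ge 0$ (for $\kappa(X)\ge 0$) or $\chi(\mathcal{O}_X)\ge 1$ (for simply connected $X$) are not on their own strong enough to exclude putative factorizations with $N<2g+4$, so the finer algebro-geometric information carried by the branch curve is essential. A secondary subtlety arises when $n=0$, in which case $X$ is no longer simply connected (the vanishing cycles lie in the commutator subgroup of $\pi_1(\S_g)$) and the standard simple-connectivity input must be replaced either by a direct analysis of the monodromy or by applying the branch-curve bound with $B$ reducible. Once these cases are handled, the matching of upper and lower bounds yields $M_g=2g+4$.
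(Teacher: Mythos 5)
Your upper bound is fine and matches the paper: the Cadavid--Korkmaz word of length $2g+4$ gives a genus-$g$ hyperelliptic fibration whose total space is the complex (ruled, blown-up) surface $(\Sigma_{g/2}\times \mathbb{S}^2)\#4\overline{\mathbb{C}P^2}$, so $M_g\le 2g+4$.

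The lower bound, however, has a genuine gap, and in fact you dismiss the route that actually works. Your plan rests on the claim that a hyperelliptic Lefschetz fibration on a complex surface $X$ realizes $X$ birationally as a double cover of a Hirzebruch surface branched along an \emph{algebraic} curve $B$ of bidegree $(2,2g+2)$, from which an adjunction-type inequality on $(n,s_1,\dots,s_{g/2})$ would force $n+s\ge 2g+4$. But the fibration is only a smooth map and the hyperellipticity is only an isotopy condition on the vanishing cycles; nothing makes $f$ holomorphic, so the branched-cover description of $X$ (which does exist topologically/symplectically) produces a branch surface that need not be algebraic, and the ``algebraicity of $B$'' you invoke is unjustified. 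Moreover, even granting it, you never exhibit the ``linear inequality sharp at $N=2g+4$'' --- you explicitly state this is the main obstacle --- so no contradiction with $n+s<2g+4$ is ever derived. Your assertion that coarse inputs like $c_1^2$ and $\chi_h$ bounds ``are not strong enough'' is also mistaken: the paper's proof uses exactly these. Concretely, the congruence coming from $H_1(\hmod_g;\Z)\cong \Z/2(2g+1)$ forces $n$ even and, combined with Ozbagci's $\sigma\le n-s-4$ and Endo's signature formula, any fibration with $n+s<2g+4$ must have $n=2g+2$, $s=1$, and then $\chi_h(X)<0$; the classification of complex surfaces (this is where the complex structure on $X$ enters, not holomorphicity of $f$) makes $X$ a blow-up of a ruled surface with $b_2^+=1$, contradicting Stipsicz's bound of $2g+4$ singular fibers for $b_2^+=1$ fibrations when $g\ge 6$ is even; the case $g=4$ is handled separately by enumerating the tuples $(n,s_1,s_2)$ allowed by the congruence mod $18$ and the Braungardt--Kotschick bound $n\ge (8g-3)/5$, and excluding $(\Sigma_2\times \mathbb{S}^2)\#m\overline{\mathbb{C}P^2}$, $m\le 3$, as a total space by an adjunction-formula argument. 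To repair your proposal you would either have to substantiate and quantify the branch-curve inequality (including the non-holomorphic case), or simply adopt this topological/classification argument.
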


\begin{theorem}\label{thm2}
For all odd $g\geq7$, $M_{g}\geq2g+6$.
\end{theorem}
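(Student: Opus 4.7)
\textbf{Proof plan for Theorem \ref{thm2}.}
The plan is to rewrite the problem numerically using the Matsumoto and Endo formulas, and then extract constraints from the complex structure on $X$. Let $n$ denote the number of singular fibers of $f$ with a nonseparating vanishing cycle and, for $1\le h\le (g-1)/2$, let $s_h$ denote the number with a separating vanishing cycle of type $h$; set $N=n+\sum_{h=1}^{(g-1)/2}s_h$. By Matsumoto and Endo,
\[
e(X) \;=\; 4(1-g)+N, \qquad
(2g+1)\,\sigma(X) \;=\; -(g+1)\,n + \sum_{h=1}^{(g-1)/2}\bigl[4h(g-h)-(2g+1)\bigr]\,s_h,
\]
so that
\[
\chi(\mathcal{O}_X) \;=\; \frac{e(X)+\sigma(X)}{4} \;=\; 1-g + \frac{gn + 4\sum_{h} h(g-h)\,s_h}{4(2g+1)}.
\]
Since $g$ is odd, $\gcd(g,4(2g+1))=1$; integrality of $\chi(\mathcal{O}_X)$ therefore yields the divisibility $n\equiv 0\pmod 4$ together with a coupled congruence modulo $2g+1$ linking $n$ and $\sum_h h(g-h)s_h$.

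I would then split according to the irregularity $q(X)$. When $q(X)=0$ (the generic regime for a hyperelliptic Lefschetz fibration with enough nonseparating vanishing cycles), the positivity $\chi(\mathcal{O}_X)\ge 1$ rearranges to
\[
gn + 4\sum_{h=1}^{(g-1)/2} h(g-h)\,s_h \;\ge\; 4g(2g+1),
\]
and $X$ is not ruled of positive irregularity, so the Bogomolov--Miyaoka--Yau inequality $\sigma(X)\le e(X)/3$ applies and, after substitution, becomes
\[
(5g+4)\,n + 4\sum_{h=1}^{(g-1)/2}\bigl[(2g+1)-3h(g-h)\bigr]\,s_h \;\ge\; 4(g-1)(2g+1).
\]
Adding three times the positivity inequality to the BMY inequality, the coefficients on the left of both $n$ and of every $s_h$ collapse to the common value $4(2g+1)$, yielding $4(2g+1)\,N \ge 4(2g+1)(4g-1)$ and hence $N \ge 4g-1 \ge 2g+6$ whenever $g\ge 7$.

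The main obstacle is the remaining case $q(X)\ge 1$, where BMY may fail; indeed this is exactly the regime that realises the even-$g$ extremum $M_g=2g+4$ on a blow-up of $\Sigma_{g/2}\times S^2$, so no improvement is available from BMY alone. Here I would exploit the double-branched-cover description of a hyperelliptic Lefschetz fibration: $X$ arises as a double cover of a (possibly blown-up) Hirzebruch surface, branched along a curve $B$ whose degree, arithmetic genus and singularity count are pinned down by $(n,s_1,\dots,s_{(g-1)/2})$. An irregular $X$ is forced, up to blow-up, to be ruled over a base of genus $g_0\ge 1$, so that $\sigma(X)=-k$ and $e(X)=4(1-g_0)+k$ for $k$ blow-ups; combining these equalities with the Matsumoto--Endo formulas and the congruence $n\equiv 0\pmod 4$ obstructs the direct analogue of the even-$g$ extremal construction, since the ``half-integer'' base genus $(g\pm 1)/2$ demanded when $g$ is odd cannot meet the mod-$4$ constraint on $n$ without at least two additional singular fibers, forcing $N\ge 2g+6$. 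Carrying this out uniformly across all admissible irregular minimal models (ruled of each irregularity, properly elliptic, and so on) and verifying the mod-$4$ parity obstruction in each case is the technical heart of the proof.
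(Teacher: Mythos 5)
Your numerical reductions are correct (the formula $\chi_h(X)=1-g+\frac{gn+4\sum_h h(g-h)s_h}{4(2g+1)}$, the resulting $n\equiv 0\pmod 4$ for odd $g$, and the algebra combining your two inequalities to get $N\ge 4g-1$), but the architecture of the argument has a genuine gap, and it sits exactly where all the content of Theorem~\ref{thm2} lives. If $N=n+s<2g+6$ and $g\ge 7$, then $e(X)=4-4g+N<10-2g<0$, so by the classification of surfaces $X$ is automatically a blow-up of a ruled surface over a base of genus $\ge 2$; in particular $q(X)\ge 2$ and your first branch ($q=0$, BMY) is vacuous. (It is also not correctly justified as stated: $\sigma\le e/3$ is the Bogomolov--Miyaoka--Yau inequality for minimal surfaces of general type, not a consequence of ``not ruled of positive irregularity''.) Thus everything reduces to the irregular ruled case, which you defer to a heuristic: that the Matsumoto--Endo formulas plus $n\equiv 0\pmod 4$ and the ``half-integer base genus'' obstruct candidates with $N<2g+6$. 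That claim is false as a purely numerical statement. For $g=7$ the tuple $(n,s_1,s_2,s_3)=(16,0,2,1)$ has $N=19<20$, $n\equiv 0\pmod 4$, satisfies the congruence of Lemma~\ref{lem1}, and gives $e=-5$, $\sigma=-3$, $b_1=6$, i.e.\ a numerically consistent candidate fibration on $(\Sigma_3\times S^2)\#3\overline{\mathbb{C}P^2}$; no parity or signature bookkeeping eliminates it.

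The paper closes precisely this case with inputs absent from your proposal: Ozbagci's hyperelliptic bound $\sigma(X)\le n-s-4$ \cite{Oz.1} and Endo's signature formula to force $\chi_h(X)<0$, hence $b_2^+(X)=1$, and then Stipsicz's theorem (Theorem~\ref{t1}(3)) that a Lefschetz fibration with $b_2^+=1$ and odd $g\ge 9$ has at least $2g+6$ singular fibers; for $g=7$ it runs an explicit case analysis using $c_1^2(X)\ge 4-4g$ \cite{St.2}, the fact that ruled surfaces have $\sigma\le 0$, and finally an adjunction-formula/degree argument in the style of Lemma~\ref{l1} (see Remark~\ref{rem1}) to show $(\Sigma_3\times S^2)\#3\overline{\mathbb{C}P^2}$ admits no genus-$7$ fibration, killing exactly the candidate above. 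Your double-branched-cover idea could in principle be developed into an alternative route, but as written nothing in the proposal rules out fibrations on blown-up irrational ruled surfaces, so the theorem is not proved.
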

For the number $N_{g}$ with $4\leq g \leq 10$, we have the following results:
\begin{theorem}\label{thm3}For the number $N_{g}$ the following holds.
\begin{enumerate}
\item[(1)] $N_{4}=12$, 
\item[(2)] $N_{5}\geq15$,
\item[(3)] $N_{6}=16$, 
\item[(4)] $N_{7}\geq17$,
\item[(5)] $N_{8}=19$ or $20$,
\item[(6)] $N_{9}\geq24$,
\item[(7)] $N_{10}=23$ or $24$.
\end{enumerate}

\end{theorem}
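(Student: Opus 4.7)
The central tool will be the Matsumoto--Endo signature formula for hyperelliptic Lefschetz fibrations: if $X\to S^2$ is a genus-$g$ hyperelliptic Lefschetz fibration with $n$ singular fibers of nonseparating vanishing cycle and $s_h$ singular fibers whose vanishing cycle bounds a genus-$h$ subsurface, then
$$\sigma(X) \;=\; -\frac{g+1}{2g+1}\,n \;+\; \sum_{h=1}^{\lfloor g/2\rfloor}\!\left(\frac{4h(g-h)}{2g+1}-1\right) s_h,$$
together with the Euler characteristic identity $e(X)=4-4g+n+\sum_h s_h$. Integrality of $\sigma(X)$ gives a linear congruence modulo $2g+1$ on the tuple $(n,s_1,\dots,s_{\lfloor g/2\rfloor})$. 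A second congruence is produced by the abelianisation of the hyperelliptic mapping class group in genus $g\geq 3$, which relates the class of a separating twist of type $h$ to $4h(g-h)$ times the nonseparating class; the product relation coming from a positive factorisation of the identity must therefore vanish in this abelian quotient.

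My plan separates the even-$g$ upper bounds from the lower bounds. For the upper bounds $N_4\le 12$, $N_6\le 16$, $N_8\le 20$, $N_{10}\le 24$, I would invoke the length-$(2g+4)$ positive factorisation in the hyperelliptic mapping class group constructed by Cadavid and Korkmaz (as recalled in the introduction), which directly realises a hyperelliptic Lefschetz fibration over $S^2$ with exactly $2g+4$ singular fibers. Combined with the lower bounds below, this gives $N_4=12$ and $N_6=16$, and bounds the remaining even cases within a window of one.

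The lower bounds are the substance of the proof. Writing $N=n+\sum_h s_h$, for each $g\in\{4,\dots,10\}$ I would enumerate all candidate totals $N$ strictly below the claimed bound and check whether a non-negative integer solution $(n,s_1,\dots,s_{\lfloor g/2\rfloor})$ with $n+\sum_h s_h=N$ can simultaneously satisfy (i) the signature-integrality congruence modulo $2g+1$, (ii) the abelianisation congruence, and (iii) $N\ge 1$. The $N_3=12$ computation of Baykur--Korkmaz is the prototype; the search here is a finite extension and grows in size with $g$, but $2g+1\le 21$ throughout our range, so the diophantine analysis is tractable. The residues $4h(g-h)\bmod (2g+1)$ for $h=1,\dots,\lfloor g/2\rfloor$ are the key numerical input, and each genus needs to be handled individually because these residue patterns change.

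The main obstacle will arise in the odd cases and in the transitional even genera $g=8,10$, where the bound is not tight. There the pure diophantine argument must be sharp enough to exclude even a single solution at $N = 2g+3$ (or the analogous threshold for odd $g$). If the two congruences above leave a feasible but unrealisable tuple, I would sharpen the obstruction by passing to a finer quotient of the hyperelliptic mapping class group, using a refined Meyer-type signature cocycle, or exploiting geometric constraints on which combinations of separating vanishing cycles of distinct types can coexist in a single positive factorisation. Closing the remaining windows ``$19$ or $20$'' at $g=8$ and ``$23$ or $24$'' at $g=10$ will require either a new construction of length $2g+3$, or an additional invariant ruling out this length; I would flag these as the genuinely open points the paper does not resolve.
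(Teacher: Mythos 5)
There is a genuine gap: the two congruences you propose (integrality of the Endo signature and the abelianisation of $\hmod_g$) are nowhere near sufficient to establish the lower bounds, and your fallback (``a finer quotient'' or ``a refined Meyer-type cocycle'') is not what closes it. Concretely, for $g=4$ the tuple $(n,s_1,s_2)=(8,2,1)$ has $n+s=11<12$, satisfies the abelianisation congruence $n+12s_1+4s_2\equiv 0 \pmod{18}$, and gives the integer signature $\sigma=\frac{1}{9}(-5n+3s_1+7s_2)=-3$; so no purely diophantine analysis of this kind can prove $N_4\geq 12$ (the same is true of the Baykur--Korkmaz bound $N_3=12$, which is not a pure congruence computation either). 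The paper's proof eliminates such surviving tuples by 4-manifold geography and gauge-theoretic input applied to the symplectic total space: the bound $n\geq(8g-3)/5$ of Braungardt--Kotschick, Ozbagci's inequality $\sigma(X)\leq n-s-4$ for hyperelliptic fibrations, Stipsicz's bound $c_1^2(X)\geq 4-4g$, Li's theorem that $c_1^2(X)\geq 2-2g$ unless $X$ is a blow-up of a rational or ruled surface, Stipsicz's theorem excluding fibrations with few singular fibers when $b_2^+(X)=1$, and, where these force $X$ to be a specific ruled surface such as $(\Sigma_2\times S^2)\#m\overline{\mathbb{C}P^2}$, an adjunction-formula/degree argument (Lemma~\ref{l1}) showing that manifold admits no genus-$4$ fibration; the genus-$6$ cases (d$3$), (d$4$) further need Taubes--Liu ($c_1^2\geq 0$ for the minimal model) together with $b_1(X)\leq 2g-1$ and Li's Lemma 2.5. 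None of these tools appears in your plan, so as written the proposal cannot rule out the feasible tuples and the lower bounds (2)--(7) would not be obtained.

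Your upper-bound discussion is fine and matches the paper (the length-$(2g+4)$ hyperelliptic factorisations for even $g$ give $N_4\leq12$, $N_6\leq16$, $N_8\leq20$, $N_{10}\leq24$), and you correctly flag that the windows at $g=8,10$ are left open; but the substance of the theorem is the elimination of the low-count tuples, which requires the symplectic-geography machinery above rather than a sharper congruence.
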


Here is an outline of the paper. In Section~\ref{S2}, we give some relevant background information from the theory of Lefschetz fibrations and some results to be used in the sequel. Section~\ref{S3} investigates the minimal number of singular fibers in  hyperelliptic Lefschetz fibrations on complex surfaces. In this section, we prove Theorems~\ref{thm1} and ~\ref{thm2}. In Section~\ref{S4}, we investigate the minimal number of singular fibers in hyperelliptic Lefschetz fibrations. We examine these numbers for $4\leq g \leq 10$ and prove Theorem~\ref{thm3}.
\medskip

\noindent
%{\bf Acknowledgments.}
\textit{Acknowledgements.}
I would like to thank my advisor Mustafa Korkmaz for many invaluable comments and discussions. Thanks are due to Anar Akhmedov and T.-J. Li for helpful conversations. I also thank the referee and the editor for reading the paper very carefully, making many valuable suggestions and corrections. This paper is a part of the author's Ph.D. thesis ~\cite{a} 
 at Middle East Technical University. The author was partially supported by the Scientific and 
Technological Research Council of Turkey (T{\"{U}}B\.{I}TAK).

%%%%%%%%%%%%%%%%%%%%%%%%%%%%%%%%%%%%%%%%%%%%%%%%

\section{ preliminaries}\label{S2}
\par  We start with a review of some basic definitions and properties of Lefschetz fibrations. In this paper, we denote the $2$-sphere by $\mathbb{S}^{2}$. Let $\Sigma_g$ denote a closed connected oriented surface of genus-$g$ and $\mod_g$ denote the mapping class group of $\Sigma_g$,  i.e., the group of isotopy classes of orientation-preserving diffeomorphisms of $\Sigma_g$. Let $M$ be a closed connected oriented smooth $4$-dimensional manifold. A smooth surjective map $f\colon M\to \mathbb{S}^{2}$ is a {\textit{Lefschetz fibration}} with connected oriented genus-$g$ regular fiber if it has finitely many critical points and around each critical point it is written in the form of  $f(z_{1},z_{2})=z_{1}^{2}+z_{2}^{2}$ with respect to some local complex coordinates agreeing with the orientations of $M$ and $\mathbb{S}^{2}$. The genus-$g$ of a regular fiber $F$ is called \textit{the genus of the fibration}. We assume that all the critical points lie in distinct fibers, called \textit{singular fibers}, which can be obtained after a small perturbation. Each singular fiber is obtained by shrinking a simple closed curve, called \textit{vanishing cycle}, in the regular fiber. If the vanishing cycle is non-separating (respectively separating), then the singular fiber is said to be \textit{irreducible} (respectively \textit{reducible}). In this paper, we also assume that all Lefschetz fibrations are nontrivial, i.e. it has at least one singular fiber and fibrations are relatively minimal, i.e. it has no fiber containing a sphere of self-intersection $-1$, otherwise one can blow-down it without changing the rest of the fibration.
\par
Lefschetz fibrations can be described combinatorially by their monodromy representations. The monodromy of a Lefschetz fibration $f\colon M\to \mathbb{S}^{2}$ is given by a positive factorization $t_{\alpha_{1}}t_{\alpha_{2}}\ldots t_{\alpha_{n}}=1$ in $\mod_g$, where $\alpha_{i}$ are the vanishing cycles of the singular fibers. (Here $t_a$ denotes the positive Dehn twist about a simple closed curve $a$ on a genus-$g$ surface.) Conversely, for a given  positive factorization $t_{a_{1}}t_{a_{2}}\ldots t_{a_{k}}=1$ in $\mod_g$, one can construct a genus-$g$ Lefschetz fibration over $\mathbb{S}^{2}$ by attaching $2$-handles along vanishing cycles $a_{i}$ in a $\Sigma_{g}$ fiber in $\Sigma_{g}\times D^{2}$ with $-1$ framing, and then by closing it up by a fiber preserving map to get a fibration over $\mathbb{S}^{2}$. Two Lefschetz fibrations $f_1\colon M_1\to \mathbb{S}^2$ and $f_2\colon M_2\to \mathbb{S}^2$ are said to be \textit{isomorphic} if there exist orientation preserving diffeomorphisms $H\colon M_1\to M_2$ and $h\colon \mathbb{S}^2 \to \mathbb{S}^2$ such that $f_2H=hf_1$.
If $g\geq2$, it is known that a genus-$g$ Lefschetz fibration over $\mathbb{S}^2$ is characterized by a positive factorization of the identity element  in $\mod_g$ up to \textit{Hurwitz moves} 
 (exchanging subwords $t_{a_{i}}t_{a_{i+1}}=t_{a_{i+1}}t_{t_{a_{i+1}}(a_{i})}$) and \textit{global conjugations} (changing each $t_{a_{i}}$ with $t_{\varphi(a_{i})}$ for some $\varphi \in \mod_g$).
\par The hyperelliptic mapping class group  $\hmod_g$  of $\Sigma_g$ is defined to be the subgroup of the mapping class group $\mod_g$ which is the centralizer of the class of a hyperelliptic involution $\iota\colon  \Sigma_g \to \Sigma_g$. We say that a genus-$g$ Lefschetz fibration is hyperelliptic if its vanishing cycles are invariant under the hyperelliptic involution $\iota$ up to isotopy.\par
We collect some useful facts about the first homology group of the hyperelliptic mapping class group.

\par Recall that for any group $G$, the first homology group of $G$ with integral coefficient is the abelianization of $G$, that is,
\[
H_1(G;\mathbb{Z})=G/[G,G],
 \]
  where $[G,G]$ is the subgroup of $G$ generated by all commutators $[a,b]=aba^{-1}b^{-1}$ for all $a,b \in G$. It is known that $H_1(\mod_g;\mathbb{Z})$ is a cyclic group generated by the class of a Dehn twist about a non-separating simple closed curve and also we have the following lemma:
  \begin{lemma}\label{hom} For a closed orientable surface of genus $g\geq1$, we have the following isomorphism of the first homology group $H_1(\mod_{g;}\mathbb{Z})$ of the mapping class group $\mod_{g}$:
  \[ H_1(\mod_{g};\mathbb{Z})\cong
  \begin{cases}
   \mathbb{Z}_{12},&\text{if $g=1$},\\
   \mathbb{Z}_{10},&\text{if $g=2$},\\
   0,&\text{if $g\geq3$}.\\
    \end{cases}
\]
\end{lemma}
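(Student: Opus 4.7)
The plan is to reduce the computation to identifying a single generator of the abelianization and then pinning down its order.

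First, I would recall that by Dehn's theorem (or by Lickorish's theorem), $\mod_g$ is generated by Dehn twists about non-separating simple closed curves, and any two such curves are related by an orientation-preserving homeomorphism of $\Sigma_g$. Consequently the corresponding Dehn twists are all conjugate in $\mod_g$, so they all project to the same element $c \in H_1(\mod_g; \mathbb{Z})$. Hence $H_1(\mod_g;\mathbb{Z})$ is cyclic, generated by $c$, and the problem reduces to computing the order of $c$.

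For $g\geq 3$, I would use the lantern relation $t_{x_1}t_{x_2}t_{x_3}t_{x_4}=t_{y_1}t_{y_2}t_{y_3}$, which lives on a four-holed sphere and embeds in $\Sigma_g$ (with $g\geq 3$) in such a way that all seven curves $x_i,y_j$ are non-separating. Abelianizing this relation yields $4c=3c$, hence $c=0$, and therefore $H_1(\mod_g;\mathbb{Z})=0$ for $g\geq 3$. This is essentially Harer's argument.

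For the low-genus cases I would proceed as follows. For $g=1$, I would invoke the classical isomorphism $\mod_1\cong \mathrm{SL}(2,\mathbb{Z})$ together with the standard computation $H_1(\mathrm{SL}(2,\mathbb{Z});\mathbb{Z})\cong\mathbb{Z}_{12}$, which follows, for instance, from the amalgamated product decomposition $\mathrm{SL}(2,\mathbb{Z})\cong \mathbb{Z}_4\ast_{\mathbb{Z}_2}\mathbb{Z}_6$. For $g=2$, I would appeal to the Birman--Hilden presentation of $\mod_2$ with Humphries generators $t_{c_1},\ldots,t_{c_5}$: the braid relations force all five images to coincide with $c$, the chain/hyperelliptic relation $(t_{c_1}t_{c_2}\cdots t_{c_5})^6=1$ abelianizes to $30c=0$, and a second relation in the presentation yields $20c=0$, so the order of $c$ divides $10$. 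To conclude that the order is exactly $10$, I would exhibit an explicit surjection $\mod_2\twoheadrightarrow\mathbb{Z}_{10}$ (for instance, via the signature cocycle of genus-$2$ Lefschetz fibrations, or directly by verifying that the relations above are compatible with such a map).

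The only delicate step is the \emph{tightness} in the $g=1$ and $g=2$ cases: all the relations in $\mod_g$ abelianize to multiples of $c$, so one easily obtains upper bounds on the order of $c$, but lower bounds require producing a nontrivial quotient. Everything else is routine, and for $g\geq 3$ the lantern argument is a one-line computation once embedded correctly.
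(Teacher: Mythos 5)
Your sketch is correct, and it is worth noting that the paper itself does not prove this lemma at all: it is stated as a known result and the reader is referred to Korkmaz's survey \cite{k2} for the proof. Your argument is essentially the standard one that such references contain: the change-of-coordinates principle makes all twists about non-separating curves conjugate, so $H_1(\mod_g;\mathbb{Z})$ is cyclic on their common class $c$; Harer's lantern trick (embeddable with all seven curves non-separating precisely when $g\geq 3$) kills $c$; $\mod_1\cong\mathrm{SL}(2,\mathbb{Z})$ handles $g=1$; and the Birman--Hilden presentation handles $g=2$. One small simplification for $g=2$: your worry about ``tightness'' is unnecessary there. Since the Birman--Hilden presentation is a \emph{complete} presentation of $\mod_2$, the abelianization is computed exactly by abelianizing the full relator set --- the braid relators die, $(t_{c_1}\cdots t_{c_5})^6=1$ gives $30c=0$, the hyperelliptic relator gives $20c=0$, and the centrality relator gives nothing --- so $H_1(\mod_2;\mathbb{Z})\cong\mathbb{Z}/\gcd(30,20)=\mathbb{Z}_{10}$ with no separate surjection needed; the same remark applies to $g=1$ if you use a presentation of $\mathrm{SL}(2,\mathbb{Z})$ rather than the amalgam. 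The lower-bound issue you flag is only genuine if one works with an incomplete list of relations.
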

For the proof of Lemma ~\ref{hom} and further details about the homology groups of the mapping class group, see \cite{k2}. \par
The following lemma can be proven by the presentation of the hyperelliptic mapping class group
 ~\cite{bh}.
\begin{lemma}\label{homh} For a closed orientable surface of genus $g\geq1$, the first
 homology group $H_1(\hmod_g;\mathbb{Z})$ of the hyperelliptic mapping class
  group $\hmod_{g}$ has the following isomorphism:
\[ H_1(\hmod_{g};\mathbb{Z})\cong
  \begin{cases}
   \mathbb{Z}/ 4(2g+1),&\text{if $g$ is odd},\\
   \mathbb{Z}/ 2(2g+1),&\text{if $g$ is even}.\\
    \end{cases}
\]
\end{lemma}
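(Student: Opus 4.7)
The plan is to derive the abelianization directly from the Birman--Hilden presentation of $\hmod_g$, cited as [bh]. Recall that $\hmod_g$ is generated by the Dehn twists $\zeta_1,\zeta_2,\dots,\zeta_{2g+1}$ about the curves $c_1,\dots,c_{2g+1}$ of a standard chain on $\Sigma_g$. The defining relations are of three kinds: the braid-type relations $\zeta_i\zeta_j=\zeta_j\zeta_i$ for $|i-j|\ge 2$ and $\zeta_i\zeta_{i+1}\zeta_i=\zeta_{i+1}\zeta_i\zeta_{i+1}$; the relation $(\zeta_1\zeta_2\cdots\zeta_{2g+1})^{2g+2}=1$; and the relations expressing that $\iota=\zeta_1\zeta_2\cdots\zeta_{2g+1}\zeta_{2g+1}\cdots\zeta_2\zeta_1$ is a central involution, in particular $\iota^2=1$.

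First I would abelianize. The braid-type relation $\zeta_i\zeta_{i+1}\zeta_i=\zeta_{i+1}\zeta_i\zeta_{i+1}$, read additively, gives $\zeta_i=\zeta_{i+1}$, so modulo the commutator subgroup all the generators collapse to a single element $\zeta$. Hence $H_1(\hmod_g;\mathbb{Z})$ is cyclic, generated by the class of a Dehn twist about a chain curve (equivalently, about any non-separating simple closed curve symmetric under $\iota$, since these are all conjugate in $\hmod_g$).

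Next I would reduce the surviving relations to conditions on the order of $\zeta$. The relation $(\zeta_1\cdots\zeta_{2g+1})^{2g+2}=1$ becomes $(2g+1)(2g+2)\,\zeta=0$. The relation $\iota^2=1$, since $\iota$ is a word of length $4g+2$ in the $\zeta_i$, becomes $2(4g+2)\,\zeta=(8g+4)\,\zeta=0$. Therefore the order of $\zeta$ divides
\[
\gcd\bigl((2g+1)(2g+2),\,4(2g+1)\bigr)=(2g+1)\cdot\gcd(2g+2,4).
\]
If $g$ is even then $2g+2\equiv 2\pmod 4$, giving the bound $2(2g+1)$; if $g$ is odd then $4\mid 2g+2$, giving the bound $4(2g+1)$. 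The remaining centrality relations $[\iota,\zeta_i]=1$ are commutators and drop out in the abelianization, so no further constraint is imposed.

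It remains to check that these upper bounds are sharp, which is the one substantive step. For this I would exhibit a surjection from $\hmod_g$ onto the cyclic group of the claimed order by sending each $\zeta_i$ to the generator; one verifies this is a well-defined homomorphism by checking all Birman--Hilden relations numerically, which reduces to verifying $(2g+1)(2g+2)\equiv 0$ and $8g+4\equiv 0$ modulo $2(2g+1)$ (respectively $4(2g+1)$), both immediate. Combining the divisibility bound with this surjection yields the stated isomorphisms. The main obstacle is simply bookkeeping: making sure one has the correct length of the word representing $\iota$ and correctly tracking the parity-dependent gcd, since an off-by-a-factor-of-two error flips the two cases.
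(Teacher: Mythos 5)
Your proof is correct and takes exactly the route the paper indicates: abelianizing the Birman--Hilden presentation of $\hmod_g$ (the paper gives no further detail, citing only that presentation), and your bookkeeping of the relator exponent sums and the parity-dependent $\gcd\bigl((2g+1)(2g+2),\,4(2g+1)\bigr)$ is right, consistent with the known cases $g=1,2$. The only superfluous step is the final ``sharpness'' verification: the abelianization of a presented group is by definition the abelian group on the same generators with the abelianized relators, so the order is exactly that gcd and no separate surjection argument is needed.
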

All Dehn twists about non-separating  simple closed curves that are invariant under the  hyperelliptic involution $\iota$ on $\Sigma_g$ are nontrivial in the hyperelliptic mapping class group of $\Sigma_g$, $\hmod_{g}$, and each of them maps to the same generator in $H_1(\hmod_{g};\Z)$ under the natural map $\hmod_{g}\to H_1(\hmod_{g};\Z)$. If a product of positive Dehn twists about non-separating curves in $\hmod_{g}$ is trivial then the number of twists is divisible by $4(2g+1)$ (respectively $2(2g+1$)) when $g$ is odd (respectively even). A separating simple closed curve on $\Sigma_g$ is said to be of \textit{type $h$} if it bounds subsurfaces of genera $h$ and $g-h$. By the even chain relation, each positive Dehn twist about a separating simple closed curve of type $h$ can be written as a product of $2h(4h+2)$ positive Dehn twists about non-separating simple closed curves. This implies the following relation between the number of non-separating singular fibers and that of separating singular fibers in a genus-$g$ hyperelliptic Lefschetz fibration:
\begin{lemma}\label{lem1}
Let $n$ (or $s$) be the number of non-separating (resp. separating) vanishing cycles in a genus-$g$ hyperelliptic Lefschetz fibration over $\mathbb{S}^{2}$. Then, we have
\begin{eqnarray}\label{eq1} 
n+\sum_{h=1}^{[g/2]}2h(4h+2)s_{h}\equiv  \begin{cases} 0 \pmod{4(2g+1)},&\text{if $g$ is odd,} \\ 
0 \pmod{2(2g+1)},&\text{if $g$ is even.}  \end{cases}
\nonumber 
\end{eqnarray}
where $s=\sum_{h=1}^{[g/2]}s_{h}$, and $s_h$ is the number of separating vanishing cycles of type $h$.
\end{lemma}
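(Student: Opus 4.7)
The plan is to read off the claimed congruence by abelianizing the monodromy relation of the Lefschetz fibration inside $\hmod_g$, and then invoking Lemma~\ref{homh} to identify the modulus. The setup is that, since the fibration is defined over $\mathbb{S}^2$ and is hyperelliptic, its global monodromy gives a positive factorization
\[
\prod_i t_{\alpha_i} \;=\; 1 \quad\text{in } \hmod_g,
\]
where each $\alpha_i$ is a simple closed curve invariant (up to isotopy) under $\iota$, and where the $\alpha_i$ partition into $n$ non-separating curves and, for each $h\in\{1,\dots,[g/2]\}$, $s_h$ separating curves of type $h$.

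Next, I would push this identity to the abelianization $H_1(\hmod_g;\Z)$ via the natural map $\hmod_g\to H_1(\hmod_g;\Z)$. By the discussion just before the lemma, every Dehn twist about a non-separating $\iota$-invariant curve maps to the same generator, call it $\gamma$, of $H_1(\hmod_g;\Z)$. For a separating curve of type $h$, the even chain relation rewrites $t_{\alpha_i}$ as a product of $2h(4h+2)$ positive Dehn twists about non-separating simple closed curves; in the abelianization this contributes $2h(4h+2)\,\gamma$. Summing all contributions, the relation $\prod_i t_{\alpha_i}=1$ descends to
\[
\Bigl(n+\sum_{h=1}^{[g/2]} 2h(4h+2)\,s_h\Bigr)\,\gamma \;=\; 0 \quad\text{in } H_1(\hmod_g;\Z).
\]

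Finally, I would apply Lemma~\ref{homh}: the group $H_1(\hmod_g;\Z)$ is cyclic of order $4(2g+1)$ when $g$ is odd and of order $2(2g+1)$ when $g$ is even, and $\gamma$ is a generator, hence has exactly that order. Therefore the integer coefficient on the left must be divisible by $4(2g+1)$ or $2(2g+1)$, respectively, which is precisely the congruence to be proved.

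The only mildly delicate point, and the one I would want to be explicit about, is the input from the even chain relation: one needs to check that the non-separating curves appearing in that chain relation can be realized as $\iota$-invariant curves, so that the identity is actually taking place in $\hmod_g$ and not merely in $\mod_g$; this is standard but worth a sentence. Everything else is formal.
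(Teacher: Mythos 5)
Your proposal is correct and follows essentially the same route as the paper, which derives the lemma from exactly the facts you cite: the factorization lives in $\hmod_g$, all twists about non-separating $\iota$-invariant curves map to the same generator of $H_1(\hmod_g;\Z)\cong\Z/4(2g+1)$ or $\Z/2(2g+1)$ (Lemma~\ref{homh}), and the even chain relation converts each separating twist of type $h$ into $2h(4h+2)$ non-separating twists. Your closing remark about realizing the chain curves as $\iota$-invariant curves is a reasonable point of care, but it matches the level of detail the paper itself assumes.
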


\begin{lemma}\cite{E,M1,M2}
Let $f\colon X \to \mathbb{S}^{2}$ be a genus-$g$ hyperelliptic Lefschetz fibration. 
Let $n$ and $s=\sum_{h=1}^{[g/2]}s_{h}$ be the numbers of non-separating
 and separating vanishing cycles of this fibration, respectively, where $s_h$ 
 denotes the number of separating vanishing cycles that separate the
genus-$g$ surface into two surfaces one of which has genus $h$. Then the 
signature of $X$ is
\begin{eqnarray*}
\sigma(X)&=&-\dfrac{g+1}{2g+1}n+\sum_{h=1}^{[g/2]} \bigg(\dfrac{4h(g-h)}{2g+1}-1\bigg)s_{h}.
\end{eqnarray*}
\end{lemma}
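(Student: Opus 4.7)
The plan is to follow the Endo--Matsumoto strategy based on Meyer's signature cocycle $\tau_g\colon \mod_g \times \mod_g \to \mathbb{Z}$ for oriented $\Sigma_g$-bundles. After removing small disk neighborhoods of the critical values of $f$, one obtains an honest $\Sigma_g$-bundle over a punctured sphere whose signature can be computed from $\tau_g$ applied to its monodromy. Each regluing of a singular-fiber neighborhood with vanishing cycle $c$ contributes a \emph{local signature} $\sigma_{\mathrm{loc}}(c) \in \mathbb{Q}$ depending only on the isotopy class of $c$, so
\[
\sigma(X) \;=\; \sum_{i} \sigma_{\mathrm{loc}}(\alpha_i).
\]
In the hyperelliptic setting the vanishing cycles are $\iota$-invariant, the non-separating invariant curves form one $\hmod_g$-orbit, and the separating invariant curves split into $[g/2]$ orbits indexed by the type $h$. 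Therefore $\sigma_{\mathrm{loc}}$ takes only $1+[g/2]$ distinct values $\sigma_0,\sigma_1,\ldots,\sigma_{[g/2]}$, reducing the problem to determining these rational constants.

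To identify them I would exploit the homological information in Lemma~\ref{homh} together with the classical relations in $\hmod_g$. The hyperelliptic relation (the chain relation of length $4g+2$) expresses a product of $4g+2$ $\iota$-invariant non-separating Dehn twists as (the class of) $\iota$, yielding one linear equation in $\sigma_0$; fixing the remaining ambiguity with a single explicit hyperelliptic Lefschetz fibration of known signature (for instance a double branched cover construction with tabulated signature) determines $\sigma_0 = -(g+1)/(2g+1)$. For each $h$, the even chain relation rewrites a type-$h$ separating Dehn twist as a product of $2h(4h+2)$ non-separating ones supported in a subsurface of genus $h$; combining this rewriting with the Meyer-cocycle defect along the relation and with $\sigma_0$ already known then forces
\[
\sigma_h \;=\; \frac{4h(g-h)}{2g+1}-1,
\]
and multiplying $\sigma_0$ by $n$ and $\sigma_h$ by $s_h$ assembles to the stated formula.

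The main obstacle is that $\sigma_{\mathrm{loc}}$ is not literally a homomorphism on $\hmod_g$ but only a quasi-morphism whose coboundary is the Meyer cocycle, so reading off $\sigma_h$ from a product-of-twists identity requires a careful accounting of $\tau_g$-contributions along the relation. The cleanest way to handle this is to promote $\sigma_{\mathrm{loc}}$ to a genuine homomorphism on a central $\mathbb{Z}$-extension of $\hmod_g$ classified by $\tau_g$, and then use Lemma~\ref{homh} to see that the values on Dehn twists of a fixed topological type are determined by a single overall constant, pinned down by a test Lefschetz fibration. This reduces the technical heart of the computation to finite linear algebra over $\mathbb{Q}$ and yields the formula in the stated form.
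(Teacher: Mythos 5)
First, a point of comparison: the paper does not prove this lemma at all --- it is quoted from Endo and Matsumoto \cite{E,M1,M2} --- so the only meaningful comparison is with those sources, and your outline is essentially Endo's strategy: restrict Meyer's cocycle $\tau_g$ to $\hmod_g$, cobound it by a rational-valued function $\phi$ which is then a class function, use the Birman--Hilden correspondence \cite{bh} to see that $\iota$-invariant curves of a fixed topological type (one non-separating type and $[g/2]$ separating types) form single $\hmod_g$-orbits, and assemble $\sigma(X)$ by Novikov additivity as a sum of local contributions. That architecture is the right one.

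However, as a proof the proposal has two genuine gaps, and they sit exactly where the content of the theorem lies. (a) The existence of the local signature --- equivalently, the fact that $[\tau_g|_{\hmod_g}]$ is trivial in $H^2(\hmod_g;\mathbb{Q})$, so that a cobounding $\phi\colon \hmod_g\to\mathbb{Q}$ exists --- is nowhere established. Your central-extension remark does not supply it: a homomorphism from the extension classified by $\tau_g$ to $\mathbb{Q}$ restricting correctly to the center exists \emph{if and only if} the class is rationally trivial, which is precisely what must be proved; in \cite{E} this is done by explicitly defining $\phi$ on the Birman--Hilden presentation of $\hmod_g$ and checking the coboundary identity on all relations. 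Lemma~\ref{homh} (finiteness of $H_1(\hmod_g;\mathbb{Z})$) gives only the \emph{uniqueness} of $\phi$, hence the class-function property, not its existence. (b) The actual values are asserted rather than derived. Saying that the even chain relation ``combined with the Meyer-cocycle defect'' forces $\sigma_h=\frac{4h(g-h)}{2g+1}-1$ hides the entire computation: note that $\sigma_h\neq 2h(4h+2)\sigma_0$, so the defect terms are not a correction but the main contribution, and evaluating $\tau_g$ on the pairs occurring along the relation is the bulk of Endo's paper. Likewise, normalizing $\sigma_0$ by ``a test fibration of known signature'' requires an independent computation of that signature (e.g.\ via the branched double cover realizing the length-$8g+4$ hyperelliptic relation and the $G$-signature/branched-cover formula); as written, this step risks circularity, since the most convenient way to ``know'' such signatures is the very formula being proved. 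In short: correct strategy, identical in outline to the cited source, but the two steps that carry the proof are missing.
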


\begin{remark}
 Ozbagci~\cite{Oz.1} concluded that $\sigma(X)\leq n-s$ for any $4$-manifold $X$ admitting a genus-$g$ Lefschetz fibration over $\mathbb{S}^2$ or $\mathbb{D}^2$ and he also proved that 
\[
\sigma(X)\leq n-s-4
\]
 when the Lefschetz fibration over $\mathbb{S}^2$ is hyperelliptic. It can be easily obtained that $\sigma(X)\leq n-s-2$ using $b_1(X)\leq 2g-1$ by the handlebody decomposition of nontrivial Lefschetz fibrations over $\mathbb{S}^2$ and the fact that every nontrivial Lefschetz fibration over $\mathbb{S}^{2}$ has at least one non-separating vanishing cycle.
 Then, Cadavid~\cite{c} improved the upper bound of signature $\sigma(X)$, showing that
 \begin{eqnarray}\label{eq2} 
\sigma(X)\leq n-s-2(2g-b_1(X)).
\end{eqnarray}
\end{remark}
  \par
 Let us recall the following Stipsicz's theorem, which we will use to 
 examine the number of singular fibers.
 
\begin{theorem}\cite{St.1} \label{t1}
Let $f\colon X \to \mathbb{S}^2$ be a nontrivial genus-$g$ Lefschetz 
fibration with $b_{2}^{+}(X)=1$.
\begin{itemize}
\item[(1)]  If $g\geq 6$ is even, then $f\colon X \to \mathbb{S}^2$ admits at least $2g+4$ singular fibers. (This lower bound is sharp.)
\item [(2)] If $g\geq 15$ is odd, then $f\colon X \to \mathbb{S}^2$ 
admits at least $2g+10$ singular fibers. (This lower bound is sharp.)
\item[(3)]  If $g\geq 9$ is odd, then $f\colon X \to \mathbb{S}^2$ 
contains at least $2g+6$ singular fibers. 
\end{itemize}
\end{theorem}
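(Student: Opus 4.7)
My plan combines the topological identity coming from $b_2^+(X)=1$ with Cadavid's signature bound to obtain a preliminary estimate, and then invokes the structural classification of $b_2^+=1$ symplectic $4$-manifolds to close the remaining cases. Writing $N=n+s$ for the total number of singular fibers and using $\chi(X)=4-4g+N$ together with $\chi(X)=2-2b_1(X)+2b_2^+(X)-\sigma(X)$, the assumption $b_2^+(X)=1$ yields
\[
N=4g-2b_1(X)-\sigma(X).
\]
Substituting this into Cadavid's inequality $\sigma(X)\leq n-s-2(2g-b_1(X))$ and rearranging gives $N\geq 4g-2b_1(X)+s$, which already secures the desired bound $N\geq 2g+4$ whenever $b_1(X)\leq g-2$.

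The hard case is therefore $b_1(X)\geq g-1$. Here I would use Gompf's theorem to equip $X$ with a symplectic form for which the fiber is symplectic, and then apply the structural classification of symplectic $4$-manifolds with $b_2^+=1$ due to Taubes and Liu: such $X$ is either rational (so $b_1(X)=0$ and we are already done by the preliminary estimate), a possibly blown-up ruled surface, or minimal with $c_1^2(X)\geq 0$. In the minimal non-rational non-ruled case, the inequality $c_1^2(X)\geq 0$ rewrites via $c_1^2=2\chi+3\sigma$ and the identity above as $\sigma(X)\geq 4b_1(X)-8$; combined with $\sigma(X)\leq 1$ this forces $b_1(X)\leq 2$, so the preliminary estimate gives $N\geq 4g-5\geq 2g+4$ for all $g\geq 5$. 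For a $k$-fold blow-up of a ruled surface over $\Sigma_h$ with $h\geq 1$, one has $b_1(X)=2h$ and $\sigma(X)=-k$, so the identity becomes $N=4g-4h+k$ and the desired lower bound translates into the constraint $h\leq (2g+k-4)/4$; ruling out the remaining high-$h$ ruled configurations requires showing that no such surface can be the total space of a genuine genus-$g$ Lefschetz fibration over $\mathbb{S}^2$, via a careful $\pi_1$ and monodromy analysis.

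Parts~(2) and~(3) follow the same template but require additional work to sharpen the bounds to $2g+6$ for odd $g\geq 9$ and to $2g+10$ for odd $g\geq 15$. For odd $g$ one exploits refined divisibility and parity constraints on the admissible signatures, together with the same case split as in part~(1), to rule out the borderline configurations that just meet the weaker bound $2g+4$ but not the sharpened one.

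The main obstacle, and where I would concentrate effort, is the high-$h$ ruled regime in the structural classification: aligning the $b_2^+=1$ condition with the combinatorial constraints coming from a genuine genus-$g$ monodromy factorization in $\mod_g$ and with the fundamental-group constraints on the total space is the delicate geometric step, and it is a careful argument there that is needed to close the proof.
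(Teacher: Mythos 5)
First, a point of reference: the paper does not prove this theorem; it is quoted from Stipsicz \cite{St.1}, so your proposal can only be compared with Stipsicz's argument and with the one piece of that machinery the paper does reproduce (Lemma~\ref{l1} and Remark~\ref{rem1}). Your reduction is correct as far as it goes: with $b_2^+(X)=1$ the identity $N=4g-2b_1(X)-\sigma(X)$ combined with Cadavid's inequality does yield $N\geq 4g-2b_1(X)+s$, which disposes of every case with $b_1(X)\leq g-2$; and the Taubes--Liu trichotomy correctly isolates blow-ups of irrational ruled surfaces as the only remaining possibility (in the minimal case $c_1^2\geq 0$ forces $\sigma\geq 4b_1-8$, hence $b_1\leq 2$, and the same bound persists for blow-ups of such minimal models since blowing up does not change $b_1$). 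This is indeed the skeleton of the actual proof.

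The genuine gap is that the case you defer is the entire content of the theorem. For a $k$-fold blow-up of a ruled surface over $\Sigma_h$ one has $N=4g-4h+k$, so proving the bounds amounts precisely to showing that $(\Sigma_h\times \mathbb{S}^2)\#k\overline{\mathbb{C}P^2}$ (and its twisted analogue) admits no genus-$g$ Lefschetz fibration when $h\geq (g-1)/2$ and $k$ is small, and you only gesture at ``a careful $\pi_1$ and monodromy analysis.'' That is not the tool that works, and nothing in your sketch produces the needed contradiction. The argument that does work --- Stipsicz's, reproduced in the paper's Lemma~\ref{l1} for $g=4$ --- writes $[F]=a[U]+b[V]+\sum_i c_i[E_i]$, uses the Li--Liu pseudoholomorphic representative of the ruling fiber to force the degree $d=a>0$, Kneser's inequality $g-1\geq d(h-1)$ to bound $d$, and then $[F]^2=0$ together with the adjunction formula to reach a numerical contradiction; this fiber-class analysis, not fundamental-group considerations, is the delicate step. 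The same objection applies more severely to parts (2) and (3): your proposed ``refined divisibility and parity constraints on admissible signatures'' are unavailable here, since the theorem concerns arbitrary (not hyperelliptic) fibrations and $H_1(\mod_g;\Z)=0$ for $g\geq 3$, so no congruence of the type in Lemma~\ref{lem1} exists; the improvement from $2g+4$ to $2g+6$ and $2g+10$ for odd $g$ again comes from the adjunction/degree analysis applied to blow-ups of $\Sigma_{(g-1)/2}\times\mathbb{S}^2$ with few exceptional spheres. Finally, the sharpness assertions in (1) and (2) --- the existence of genus-$g$ fibrations with exactly $2g+4$, resp.\ $2g+10$, singular fibers on $(\Sigma_{g/2}\times \mathbb{S}^2)\#4\overline{\mathbb{C}P^2}$ and $(\Sigma_{(g-1)/2}\times \mathbb{S}^2)\#8\overline{\mathbb{C}P^2}$ --- are part of the statement and are not addressed at all in your proposal.
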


We want to remark that in the above theorem the lower bounds in $(1)$ and $(2)$ are sharp, that is, the minimum values $2g+4$ and $2g+10$, respectively, can be realized on ruled surfaces which are uniquely determined as $(\Sigma _{g/2}\times \mathbb{S}^{2})\#4\overline{\mathbb{C} P^{2}}$ and $(\Sigma _{(g-1)/2}\times \mathbb{S}^{2})\#8\overline{\mathbb{C} P^{2}}$, respectively \cite[Sections $4.1$ and $4.2$]{St.1}. However, in $(3)$, the lower bound may not be sharp, that is, we do not know whether there exists a Lefschetz fibration $f\colon X \to \mathbb{S}^2$ with $b_{2}^{+}(X)=1$ and $2g+6$ singular fibers.
\section{The minimal number of singular fibers in hyperelliptic Lefschetz fibrations on complex surfaces}\label{S3}
\subsection{Even genus case}
In this section, first we prove some lemmas to prove Theorem ~\ref{thm1}.
\begin{lemma}\label{l1} The $4$-manifold 
$(\Sigma _{2}\times S^{2})\#3\overline{\mathbb{C} P^{2}}$
 does not admit a genus-$4$  Lefschetz fibration  over $\mathbb{S}^2$.
\end{lemma}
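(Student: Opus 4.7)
The approach is by contradiction: suppose $X := (\Sigma_2 \times \mathbb{S}^2) \# 3\overline{\mathbb{CP}^2}$ admits a genus-$4$ Lefschetz fibration $f \colon X \to \mathbb{S}^2$. First I would compute the invariants $\chi(X) = -1$, $\sigma(X) = -3$, $b_1(X) = 4$, and $b_2^+(X) = 1$. Substituting into the Euler-characteristic formula $\chi(X) = 4 - 4g + N$ with $g = 4$ forces $N = 11$, so $n + s = 11$, where $n$ and $s$ denote the numbers of non-separating and separating vanishing cycles of $f$.

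Next I would apply Cadavid's inequality~\eqref{eq2} with $g = 4$, $b_1(X) = 4$, $\sigma(X) = -3$; this yields $n - s \geq 5$, leaving only the four cases $(n, s) \in \{(8, 3), (9, 2), (10, 1), (11, 0)\}$. Each of these must still be eliminated.

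The main step uses $b_2^+(X) = 1$ and mimics Stipsicz's approach in the proof of Theorem~\ref{t1}(1). By Gompf's theorem $X$ is symplectic, and by Liu's classification of symplectic $4$-manifolds of Kodaira dimension $-\infty$, the minimal model of $X$ is (from matching $b_1(X) = 4$) identified as $\Sigma_2 \times \mathbb{S}^2$, with exactly $k = 3$ exceptional classes $E_1, E_2, E_3$ accounting for $\chi$ and $\sigma$. Writing the fiber class as $[\Sigma] = aA + bB - c_1 E_1 - c_2 E_2 - c_3 E_3$ in the natural basis of $H_2(X; \mathbb{Z})$, and imposing $[\Sigma]^2 = 0$, $K_X \cdot [\Sigma] = 2g - 2 = 6$, together with the positivity $c_i \geq 1$ coming from each $E_i$ being pseudo-holomorphic with respect to a compatible almost complex structure, yields the $g = 4$ analog of Stipsicz's inequality $k \geq 4h - 2g + 4 = 4$ when $h = 2$. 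This contradicts $k = 3$.

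The main obstacle is the last step, since Theorem~\ref{t1}(1) is only stated for $g \geq 6$. For $g = 4$ one must redo Stipsicz's analysis by hand: enumerate the integer solutions of $2ab = c_1^2 + c_2^2 + c_3^2$ and $2(b - a) + c_1 + c_2 + c_3 = 6$ with $c_i \geq 1$, and rule out each remaining candidate, either by showing that it violates the Cadavid bound for the prescribed $(n,s)$ or by producing an inconsistency between the hypothetical fiber class and the structure of the ruling $X \to \Sigma_2$. Completing this finite enumeration yields the desired contradiction.
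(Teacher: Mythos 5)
Your proposal has the right general framework (fiber class in $H_2$, adjunction with the standard canonical class of the blown-up ruled surface, $[F]^2=0$), but the step that is supposed to produce the contradiction does not work as stated, and the missing ingredient is exactly the heart of the paper's proof. The constraints you impose --- $[\Sigma]^2=0$, $K_X\cdot[\Sigma]=2g-2=6$, and $c_i\geq 1$ --- do \emph{not} force $k\geq 4$: for example, the class $[F]=3[U]+3[V]-[E_1]-[E_2]-4[E_3]$ satisfies $[F]^2=0$ and $K\cdot[F]=6$ with all exceptional multiplicities positive, so your ``$g=4$ analog of Stipsicz's inequality'' cannot follow from these conditions alone, and the ``finite enumeration'' you propose is not even finite without an a priori bound on the coefficients. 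What the paper does, and what your argument lacks, is a bound on the coefficient $a$ of the section class: composing the blow-down with the ruling $\Sigma_2\times \mathbb{S}^2\to\Sigma_2$ gives a map $F\to\Sigma_2$ of degree $d=a$, which is positive because the ruling fiber has a pseudo-holomorphic representative (Li--Liu), and applying Kneser's degree inequality to a \emph{singular} fiber, whose normalization has genus at most $3$, yields $0<a\leq 2$. Only with $a\in\{1,2\}$ do the two Diophantine identities reduce to sums of squares being negative, which is the actual contradiction. In addition, your claim $c_i\geq 1$ is asserted rather than proved (positivity of intersections with the exceptional spheres only gives $[F]\cdot E_i\geq 0$, and excluding $0$ needs a separate argument), so even the weaker input you rely on is not justified.

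A secondary remark: the opening computation ($e(X)=-1$, hence $n+s=11$, and Cadavid's inequality restricting $(n,s)$) is correct but plays no role in closing the argument; the paper's proof never uses the vanishing-cycle count for this lemma, since the contradiction comes purely from the homology class of the fiber together with the degree bound described above. If you add the Kneser/degree step and then carry out the (now genuinely finite) case analysis $a=1,2$ as in the paper, your outline becomes a complete proof.
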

\begin{proof} 
Suppose that $(\Sigma _{2}\times \mathbb{S}^{2})\#3\overline{\mathbb{C} P^{2}}$ admits a genus-$4$ Lefschetz fibration and consider the homology class of a regular fiber $F$. We may write
\[
[F]=a[U]+b[V]+\displaystyle\sum_{i=1}^{3} c_{i}[E_{i}]\in H_{2}\big((\Sigma _{2}\times \mathbb{S}^{2})\#3\overline{\mathbb{C} P^{2}};\mathbb{Z}\big),
\]
for some integers $a$,$b$ and $c_{i}$,
where $[U]$, $[V]$ denote the homology classes of the section and fiber of the ruling $\Sigma _{2}\times S^{2}\to \Sigma _{2}$, respectively, such that $[U]^{2}=[V]^{2}=0$, $[U]\cdot [V]=1$, and $[E_{i}]$ denote the homology class of the exceptional sphere of the $i$th blow-up.

The composition of the blowing down and the projection map $\Sigma _{2}\times \mathbb{S}^{2}\to \Sigma _{2} $  leads to a degree-$d$ map $F\rightarrow \Sigma _{2}$ for some integer $d$. The degree $d$ must be equal to $a$. Moreover, since the fiber of the trivial $\mathbb{S}^{2}$-bundle $\Sigma _{2}\times \mathbb{S}^{2}\to \Sigma _{2}$ has a pseudo-holomorphic representative \cite{Liliu}, the degree of the map $F\rightarrow \Sigma _{2}$ is positive by the positivity of intersection.

Consider a singular fiber $\Sigma$. Since the normalization of $\Sigma$ has genus $\leq 3$, such a degree-$d$ map yields the following inequality
\[
3-1\geq g(\Sigma)-1 \geq d(2-1)=a(2-1),
\]
where $g(\Sigma)$ is the genus of the fiber $\Sigma$ ~\cite{Kne.1}.
Therefore, $0<d=a\leq 2$. Since $[F]^{2}=0$, we have 
\begin{eqnarray}
2ab=\displaystyle\sum_{i=1}^{3} c_{i}^{2}.\label{4eqn}
\end{eqnarray}
Since the symplectic structure on $(\Sigma _{2}\times \mathbb{S}^{2})\#3\overline{\mathbb{C} P^{2}} $ is unique up to deformations and diffeomorphisms, we can apply the adjunction formula
\[
2g(F)-2=[F]^{2}+[K]\cdot [F], 
\]
where $[K]=-2[U]+(2h-2)[V]+[E_{1}]+[E_{2}]+[E_{3}]$ is the canonical class with $h=g(\Sigma_2)=2$. In this case, the adjunction formula gives
\begin{eqnarray}\label{adjunction}
2g(F)-2=2ah-2a-2b-\displaystyle\sum_{i=1}^{3} c_{i}.
\end{eqnarray}
Thus, for $g(F)=4$ and $h=2$ , we have
\begin{eqnarray}
6=2a-2b-\displaystyle\sum_{i=1}^{3} c_{i}\label{adj1}.
\end{eqnarray}
For $a=1$, by the identities (\ref{4eqn}) and (\ref{adj1}) we have 
 \[
 \displaystyle\sum_{i=1}^{3} c_{i}^{2}=2b \mbox{ and } \displaystyle\sum_{i=1}^{3} c_{i} =-4-2b,
 \]
   which lead to
  \[
  \displaystyle\sum_{i=1}^{3} c_{i}^{2}+\displaystyle\sum_{i=1}^{3} c_{i} =-4.
  \]
  Hence $\displaystyle\sum_{i=1}^{3} \bigg(c_{i}+\frac{1}{2}\bigg)^{2}=-\frac{13}{4}$, which is not possible.

 In the case $a=2$, using the identities (\ref{4eqn}) and (\ref{adj1}), we have the following equalities:
 \[
 4b=\displaystyle\sum_{i=1}^{3} c_{i}^{2} \mbox{ and }
   2=-2-\displaystyle\sum_{i=1}^{3} c_{i},
   \]
  which give
  \[
  \displaystyle\sum_{i=1}^{3} c_{i}^{2}+2\displaystyle\sum_{i=1}^{3} c_{i}=-4.
  \]
 Thus, the resulting equality is $\displaystyle\sum_{i=1}^{3} (c_{i}+1)^{2}=-1$, which is a contradiction. Therefore, this shows that  $(\Sigma _{2}\times \mathbb{S}^{2})\#3\overline{\mathbb{C} P^{2}}$ does not admit a genus-$4$ Lefschetz fibration over $\mathbb{S}^2$.
 \end{proof}
\begin{remark}\label{rem1} The proof of the above theorem is based on Stipsicz's  
technique in \cite[Lemma $4.4$]{St.1}, and some arguments of \cite[Theorem $21$]{BK.1} 
and \cite[Lemma $4.2$]{Li.1}. It implies that Theorem \ref{t1} $(1)$ is true for $g=4$ and similarly, one can also show that Theorem \ref{t1} $(3)$ holds for $g=7$.
\end{remark}
Let  $e(X)$ denote the Euler characteristic of a $4$-manifold $X$.
For a genus-$g$ Lefschetz fibration $f\colon M \to \mathbb{S}^2$ with $n$ separating and $s$ non-separating vanishing cycles, we have 
\begin{eqnarray*}
 e(M)=4-4g+n+s.
 \end{eqnarray*}
We define the following two invariants associated to the $4$-manifold $M$:
\begin{eqnarray*}
 \chi_{h}(M)=\dfrac{e(M)+\sigma(M)}{4} \textrm{ and } c_{1}^{2}(M)=2e(X)+3\sigma(X).
 \end{eqnarray*}
Note that if $M$ is a complex surface, then $\chi_{h}(M)$ is the holomorphic Euler characteristic of $M$ and $c_{1}^{2}(M)$ is the square of the first Chern class of $M$.
\begin{lemma}\label{hollemma}
Let $f$ be a genus-$g$ hyperelliptic Lefschetz fibration on a complex surface $X$ over $\mathbb{S}^2$ with even $g\geq 6$ or odd $g\geq 9$. If $n+s<2g+4$, then $n\geq2g+2$.
\end{lemma}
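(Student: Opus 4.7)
The plan is to argue by contradiction: assume a genus-$g$ hyperelliptic Lefschetz fibration $f\colon X\to\mathbb{S}^{2}$ on a complex surface $X$ with $n+s<2g+4$ but $n<2g+2$. The first step is to invoke Theorem~\ref{t1}(1) (for even $g\geq 6$) or~\ref{t1}(3) (for odd $g\geq 9$): if $b_{2}^{+}(X)=1$, the theorem forces $n+s\geq 2g+4$, contradicting the hypothesis, so $b_{2}^{+}(X)\neq 1$. Since the Lefschetz fibration endows $X$ with a symplectic structure, $b_{2}^{+}(X)\geq 1$, hence $b_{2}^{+}(X)\geq 2$.

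The next step is to extract two classical inequalities for complex surfaces. By the Enriques--Kodaira classification, every minimal complex surface with $b_{2}^{+}\geq 2$ has Kodaira dimension $\kappa\geq 0$, because rational and ruled surfaces all have $b_{2}^{+}=1$. For such a minimal surface both $\chi_{h}\geq 0$ and the Bogomolov--Miyaoka--Yau inequality $c_{1}^{2}\leq 3c_{2}$ hold. Since $\chi_{h}$ is a birational invariant and BMY is preserved (in fact strengthened) under blow-ups, both descend to $X$ itself.

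Now substitute the Endo--Matsumoto signature formula together with $e(X)=4-4g+n+s$ into $\chi_{h}(X)\geq 0$ and into $e(X)-3\sigma(X)\geq 0$ (equivalent to BMY). Straightforward calculations convert these into
\[
gn+\sum_{h=1}^{[g/2]}4h(g-h)\,s_{h}\;\geq\;(2g+1)(4g-4)
\]
and
\[
(5g+4)\,n+\sum_{h=1}^{[g/2]}\bigl((8g+4)-12h(g-h)\bigr)\,s_{h}\;\geq\;(2g+1)(4g-4).
\]
Adding three times the first inequality to the second, every $s_{h}$ coefficient collapses to $8g+4$ (independent of $h$) and the coefficient of $n$ also becomes $8g+4$. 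This yields the clean bound $n+s\geq 4g-4$. For even $g\geq 6$ or odd $g\geq 9$, $4g-4\geq 20>2g+3$, contradicting $n+s<2g+4$. Therefore no such $X$ exists, and the implication of the lemma holds vacuously (in particular $n\geq 2g+2$).

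The hard part will be spotting the algebraic identity in the last step: the combination $3\cdot(\chi_{h}\geq 0)+(\mathrm{BMY})$ is designed precisely so that every $h$-dependent term in the signature formula cancels, producing a single clean inequality $n+s\geq 4g-4$; neither of the two inequalities alone suffices to contradict the hypothesis $n+s<2g+4$. A secondary subtlety is justifying that $\chi_{h}\geq 0$ and BMY actually descend from the minimal model $X^{\min}$ to the possibly blown-up $X$, which relies on the standard blow-up formulas for the Chern numbers together with the birational invariance of $\chi_{h}$.
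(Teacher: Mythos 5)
Your argument is correct, but it takes a genuinely different route from the paper, and in fact proves something stronger: under the hypotheses (even $g\geq 6$ or odd $g\geq 9$ and $n+s<2g+4$) no such fibration on a complex surface exists at all, so the conclusion $n\geq 2g+2$ holds vacuously. The paper instead keeps the lemma as a real dichotomy: assuming $n<2g+2$, it treats the cases $n<2g$ (via Ozbagci's bound $\sigma(X)\leq n-s-4$) and $n=2g$ (via the Endo--Matsumoto signature formula with $s\leq 3$) to force $\chi_h(X)<0$, then invokes the classification fact that $\chi_h<0$ makes $X$ a blow-up of a ruled surface, hence $b_2^+(X)=1$, contradicting Stipsicz's theorem; finally the congruence of Lemma~3.2 gives $n$ even, so $n\geq 2g+2$. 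Your route replaces the ``$\chi_h<0\Rightarrow$ ruled'' direction by its contrapositive package: $b_2^+\geq 2$ forces the minimal model to have nonnegative Kodaira dimension, hence $\chi_h\geq 0$ and Bogomolov--Miyaoka--Yau, both of which descend to $X$; your $3\cdot(\chi_h\geq0)+(\mathrm{BMY})$ combination is exactly $3(e+\sigma)+(e-3\sigma)=4e\geq 0$, i.e.\ $e(X)\geq 0$, so $n+s\geq 4g-4>2g+3$. Note in particular that the hyperelliptic signature formula is then not needed at all (the $\sigma$-terms cancel identically), and hyperellipticity enters only through the genus hypotheses of Theorem~2.6. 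What each approach buys: yours is shorter and subsumes the rest of the paper's proof of Theorem~1.1 for even $g\geq6$ (and the $g\geq9$ part of Theorem~1.2), at the cost of invoking BMY and the Enriques--Kodaira classification; the paper's proof avoids BMY, uses only the standard ``ruled iff $\chi_h<0$'' characterization plus signature estimates special to hyperelliptic fibrations, and, importantly, produces the explicit structural output ($n$ even, $n\geq 2g+2$, hence $n=2g+2$, $s=1$) that is reused verbatim in the later case analyses ($g=4$, $g=7$, and Section~4), where your vacuity argument does not apply.

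Two small points to tidy up. First, the displayed chain ``$4g-4\geq 20>2g+3$'' is false as written for odd $g\geq 9$ (e.g.\ $g=9$ gives $2g+3=21$); you should simply say $4g-4>2g+3$ for all $g\geq 6$. Second, when asserting that minimal surfaces with $b_2^+\geq2$ have $\kappa\geq0$, the minimal surfaces of Kodaira dimension $-\infty$ are the rational, ruled \emph{and} class VII surfaces; the latter have $b_2^+=0$, so your conclusion stands, but they should be mentioned (your appeal to the symplectic structure already guarantees $b_2^+\geq1$, which is what excludes them).
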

\begin{proof}
Suppose that there exists a hyperelliptic Lefschetz fibration on a complex surface $X$ with $n< 2g+2$.

Let us first consider $n<2g$. Using the inequality $\sigma(X)\leq n-s-4$ for hyperelliptic Lefschetz fibrations over $\mathbb{S}^2$ ~\cite{Oz.1}, we have 
\begin{eqnarray*}
\chi_{h}(X)&=&\dfrac{e(X)+\sigma(X)}{4} \\
&\leq& \dfrac{(4-4g+n+s)+(n-s-4)}{4}\\
&=&\dfrac{2n-4g}{4}<0.
\end{eqnarray*}

Now, assume that $n=2g$, which gives rise to $s\leq3$. By the signature formula, we get
\begin{eqnarray*}
\sigma(X)&=&-\dfrac{g+1}{2g+1}n+\sum_{h=1}^{[g/2]} \Big(\dfrac{4h(g-h)}{2g+1}-1\Big)s_{h}\\
&\leq &-\dfrac{g+1}{2g+1}(2g)+3\Big(\dfrac{4(g/2)(g/2)}{2g+1}-1\Big)\\ 
&=&\dfrac{g^2-8g-3}{2g+1}\\
&<&\dfrac{g}{2}-3 
\end{eqnarray*}
and also, using $n+s\leq2g+3$ we have
\begin{eqnarray*}
\chi_{h}(X)&=&\dfrac{e(X)+\sigma(X)}{4}\\
&<&\dfrac{4-4g+2g+3+(g/2)-3}{4}\\
&\leq &\dfrac{-3(g/2)+4}{4}<0.
\end{eqnarray*}

Hence, we conclude that $\chi_{h}(X)<0$ if $n\leq2g$. By the classification of complex surfaces, $X$ is diffeomorphic to a blow up of a ruled surface which implies that $b_{2}^{+}=1$ ~\cite{ba}. However, this contradicts Theorem~\ref{t1}. Therefore, $n>2g$. Since the number $n$ is even by the equality in Lemma~\ref{lem1}, we get the required inequality.

\end{proof}
\subsubsection{Proof of Theorem~\ref{thm1}}

Suppose that  we have a hyperelliptic Lefschetz fibration on a complex surface $X$ with  $n+s<2g+4$ and $g\geq6$ is even. Hence, $n\geq 2g+2$ by Lemma~\ref{hollemma}. The equality in Lemma~\ref{lem1} implies that $n$ is even and also $s=\sum_{h=1}^{[g/2]}s_{h}>0$ . Thus, $n=2g+2$ and $s=1$.

The signature $\sigma(X)$ of $X$ is computed using the signature formula as follows:
\begin{eqnarray*}
\sigma(X)&=&-\dfrac{g+1}{2g+1}n+\sum_{h=1}^{[g/2]} \Big(\dfrac{4h(g-h)}{2g+1}-1\Big)s_{h}\\
&\leq &-\dfrac{g+1}{2g+1}(2g+2)+\Big(\dfrac{4(g/2)(g/2)}{2g+1}-1\Big)\\ 
&=&-\dfrac{g^2+6g+3}{2g+1}\\
&<&-\dfrac{g}{2}.
\end{eqnarray*}
Using $\sigma(X)<-\dfrac{g}{2}$,  $n=2g+2$ and $s=1$, we get:
\begin{eqnarray*}
\chi_{h}(X)&=&\dfrac{e(X)+\sigma(X)}{4}\\
&<&\dfrac{4-4g+2g+3-(g/2)}{4}\\
&\leq & \dfrac{-5(g/2)+7}{4}<0.
\end{eqnarray*}
In this case, the classification of complex surfaces implies that $X$ is a blow-up of a ruled surface and hence $b_{2}^{+}=1$. However, this is impossible if $g\geq6$ by Theorem~\ref{t1}. Thus $M_{g}\geq2g+4$. For even $g\geq6$, the existence of the genus-$g$ hyperelliptic Lefschetz fibration over $\mathbb{S}^2$ with $2g+4$ singular fibers ~\cite{c,dmp,k1} implies that $M_{g}=2g+4$.
\par
Now, consider the remaining case, $g=4$. Assume that there exists a hyperelliptic Lefschetz fibration so that $n+s<2g+4=12$. The equation in Lemma~\ref{lem1} leads to $n+12s_1+4s_2\equiv 0 \pmod{18}$, where $s=s_1+s_2$ and $n$ is even. 
Moreover, we have $n\geq6$ using the inequality $n\geq \dfrac{1}{5}(8g-3)=\dfrac{29}{5}$ given in~\cite{BK.1}.\par
The possible triples $(n,s_1,s_2)$ and some topological invariants of the corresponding
 genus-$4$ Lefschetz fibrations over $\mathbb{S}^2$, which can be easily computed 
 using the signature formula and $e(X)=4-4g+n+s=-12+n+s_1+s_2$, are given as follows:

\begin{center}
\begin{tabular}{ r|c|c|c|c| }
\multicolumn{1}{r}{}
 &  \multicolumn{1}{c}{$(n,s_1,s_2)$}
 & \multicolumn{1}{c}{$e(X)$} 
 & \multicolumn{1}{c}{$\sigma(X)$} 
 & \multicolumn{1}{c}{$c_{1}^{2}(X)$}  \\
\cline{2-5}
(a$1$) & $(6,1,0)$ & $-5$&$-3$&$-19$ \\
\cline{2-5}
(a$2$) & $(6,4,0)$  & $-2$ &$-2$&$-10$\\
\cline{2-5}
(a$3$) & $(6,0,3)$  & $-3$ &$-1$&$-9$\\
\cline{2-5}
(a$4$) & $(8,2,1)$  &$ -1$ &$-3$&$-11$\\
\cline{2-5}
\end{tabular}
\end{center}
\vspace*{0.4cm}
We now rule out all cases:

Case (a$1$). In this case, $c_1^2(X)=-19 < 4-4g=-12$, which gives a contradiction~\cite{St.2}. 

Cases (a$2$)$-$(a$4$). In these cases, $c_{1}^{2}(X)<2-2g=-6$. This implies that 
$X$ is a blow-up of a rational or ruled surface~\cite{Li.1}. Thus we have $b_{2}^{+}(X)=1$.  Moreover, 
using inequality (\ref{eq2}), one can conclude that
$X$ cannot be simply-connected and so it is a blow-up of a ruled surface.
 The equalities 
\begin{eqnarray*}
e(X)&=&2-2b_1(X)+b_{2}^{+}(X)+b_{2}^{-}(X)\\
&=&3-2b_1(X)+b_{2}^{-}(X)
\end{eqnarray*}
and 
\begin{eqnarray*}
\sigma(X)&=&b_{2}^{+}(X)-b_{2}^{-}(X)=1-b_{2}^{-}(X)
\end{eqnarray*}
imply that $b_{1}(X)=4$. Hence, $X$  is diffeomorphic to $(\Sigma _{2}\times S^{2})\#m\overline{\mathbb{C} P^{2}}$. (Note that $m=2, 1$ and $3$ for the cases (a$2$), (a$3$) and (a$4$), respectively). From the proof of Lemma~\ref{l1}, we see that  $(\Sigma _{2}\times S^{2})\#m\overline{\mathbb{C} P^{2}}$ cannot admit a genus-$4$  Lefschetz fibration over $\mathbb{S}^{2}$ for $m=0,1,2,3$. Since there is a hyperelliptic genus-$4$ Lefschetz fibration with $12$ singular fibers~\cite{c,dmp,k1}, we have $M_{4}=12$. This proves our claim.

\subsection{Odd genus case}
In this section, we find a lower bound for the number $M_{g}$ when $g\geq7$ is odd.
\subsubsection{Proof of Theorem~\ref{thm2}}
Suppose that there exists a hyperelliptic Lefschetz fibration on a complex surface $X$ with odd $g\geq 7$ and $n+s<2g+6$.

First consider the case $g\geq 9$. If $n<2g$, then it can be shown that $\chi_h(X)<0$ using the inequality $\sigma(X) \leq n-s-4$ as in the proof of Lemma~\ref{hollemma}. This implies that $b_{2}^{+}=1$ by the classification of complex surfaces. But, this gives a contradiction with Theorem~\ref{t1}. The odd case of the equation in Lemma~\ref{lem1} leads to $n\equiv 0 \pmod{4}$. We can conclude that $n\geq 2g+2$. The assumption $n+s<2g+6$ gives rise to $n=2g+2$ and $s\leq3$. Therefore, the signature formula implies the following inequality:
\begin{eqnarray*}
\sigma(X)&=&-\dfrac{g+1}{2g+1}n+\sum_{h=1}^{[g/2]} \Big(\dfrac{4h(g-h)}{2g+1}-1\Big)s_{h}\\
&\leq & -\dfrac{g+1}{2g+1}(2g+2)+3\Big(\dfrac{4(g/2)(g/2)}{2g+1}-1\Big)\\ 
&=&\dfrac{g^2-10g-5}{2g+1}\\
&<&\dfrac{g}{2}-5.
\end{eqnarray*}
Then, using the inequality $\sigma(X)<\dfrac{g}{2}-5$, the holomorphic Euler characteristic satisfies
\begin{eqnarray*}
\chi_{h}(X)&=&\dfrac{e(X)+\sigma(X)}{4}=\dfrac{4-4g+n+s+\sigma(X)}{4}\\
&<&\dfrac{4-4g+2g+5+(g/2)-5}{4}\\
&\leq&\dfrac{-3g}{8}+1<0.
\end{eqnarray*}
Hence, the classification of complex surfaces implies that $X$ is a blow-up of a ruled surface. In this case, $b_{2}^{+}(X)=1$. However, this contradicts to Theorem~\ref{t1}.

Now consider the case $g=7$. Suppose that we have a hyperelliptic  genus-$7$ Lefschetz fibration $X$ with $n+s<20$, where $s=s_1+s_2+s_3$. We know that 
$n\geq\dfrac{1}{5}(8g-3)=\dfrac{53}{5}$ (and therefore $n\geq11$) ~\cite{BK.1} and it follows from the congruence
\[
n+12s_1-20s_2+24s_3 \equiv 0\pmod{60}
\]
 that $n \equiv 0 \pmod{4}$  by Lemma~\ref{lem1}. Hence the possible values of  
 $(n,s_1,s_2,s_3)$, $e(X)$, $\sigma(X)$, $c_{1}^{2}(X)$ and $\chi_h(X)$ are as follows:
\begin{center}
\begin{tabular}{ r|c|c|c|c|c| }
\multicolumn{1}{r}{}
 &  \multicolumn{1}{c}{$(n,s_1,s_2,s_3)$}
 & \multicolumn{1}{c}{$e(X)$} 
 & \multicolumn{1}{c}{$\sigma(X)$} 
 & \multicolumn{1}{c}{$c_{1}^{2}(X)$}
 & \multicolumn{1}{c}{$\chi_h(X)$}  \\

\cline{2-6}
(b$1$) & $(12,0,0,2)$  &$ -10$ &$-2$&$-26$&$-3$\\
\cline{2-6}
(b$2$) & $(12,2,0,1)$  &$ -9$ &$-3$&$-27$&$-3$\\
\cline{2-6}
(b$3$) & $(12,4,0,0)$  &$ -8$ &$-4$&$-28$&$-3$\\
\cline{2-6}
(b$4$) & $(12,1,0,4)$  & $-7$ &$3$&$-5$&$-1$\\
\cline{2-6}
(b$5$) &$(12,0,3,2)$  & $-7$ &$3$&$-5$&$-1$\\
\cline{2-6}
(b$6$) & $(12,3,0,3)$  & $-6$ &$2$&$-12$&$-1$\\
\cline{2-6}
(b$7$) & $(12,2,3,1)$  &$ -5$ &$1$&$-7$&$-1$\\
\cline{2-6}
(b$8$) & $(12,0,0,7)$  &$ -5$ &$9$&$-5$&$1$\\
\cline{2-6}
(b$9$) & $(12,5,0,2)$  &$ -5$ &$1$&$-7$&$-1$\\
\cline{2-6}
(b$10$) &$ (12,4,3,0)$  &$ -5$ &$1$&$-7$&$-1$\\
\cline{2-6}
(b$11$) & $(16,0,2,1)$  &$ -5$ &$-3$&$-19$&$-2$\\
\cline{2-6}
\end{tabular}
\end{center}
\vspace*{0.4cm}

Cases (b$1$)$-$(b$3$). The manifold $X$ has $c_{1}^{2}(X)<4-4g=-24$, which gives a 
contradiction ~\cite{St.2}.

Cases (b$4$)$-$(b$7$), (b$9$) and (b$10$). In these cases, $\chi_h(X)<0$. Thus, $X$
is a blow-up of a ruled surface. However, $\sigma(X)\leq 0$ for such a manifold. Hence, we exclude these cases.

Case (b$8$). In this case, the manifold $X$ does not satisfy the inequality 
$\sigma(X)\leq n-s-4$.

Case (b$11$). In this case, since $c_{1}^{2}(X)<2-2g=-12$, $X$ is diffeomorphic to a blow-up of a rational or ruled surface. Hence $b_2^{+}=1$. We have 

\begin{eqnarray*}
e(X)&=&-5=2-2b_{1}(X)+b_{2}^{+}(X)+b_{2}^{-}(X)\\
&=&3-2b_1(X)+b_{2}^{-}(X)
\end{eqnarray*}
and 
\begin{eqnarray*}
\sigma(X)&=&-3=b_{2}^{+}(X)-b_{2}^{-}(X)\\
&=&1-b_{2}^{-}(X).
\end{eqnarray*}
Hence $(b_{1}(X),b_{2}^{+}(X),b_{2}^{-}(X))=(6,1,4)$. Therefore, 
$X=(\Sigma _{3}\times S^{2})\#3\overline{\mathbb{C}P^{2}}$. But one can prove that $(\Sigma _{3}\times S^{2})\#3\overline{\mathbb{C}P^{2}}$ does not admit a genus-$7$ Lefschetz fibration over $\mathbb{S}^2$ using the same idea as in the proof of Lemma~\ref{l1}. This finishes the proof.
\qed
\section{The minimal number of singular fibers in hyperelliptic Lefschetz fibrations} \label{S4}
In this section, we determine the minimal number of singular fibers in some hyperelliptic Lefschetz fibrations over $\mathbb{S}^{2}$. The proofs of Theorems~\ref{thm1} and~\ref{thm2} rely on the fact that any complex surface admitting a symplectic structure with $\chi_{h}<0$ is diffeomorphic to a ruled surface. In this section, we study the 
minimal number of singular fibers in hyperelliptic Lefschetz fibrations over $\mathbb{S}^{2}$ 
that may not have a complex structure. Recall that $N_{g}$ denotes
the minimal number of singular fibers in all hyperelliptic genus-$g$ Lefschetz fibrations over $\mathbb{S}^2$.

\subsubsection{Proof of Theorem~\ref{thm3}}
One can easily conclude that $N_{4}=12$ and $N_{7}\geq 17$ by the proofs of
Theorems~\ref{thm1} and ~\ref{thm2} (for $g=7$, refer to the cases (b$1$)$-$(b$3$)), respectively.

Now let us begin the proof of Theorem~\ref{thm3} (2).
Suppose that $N_{5}<15$ so that we have a hyperelliptic genus-$5$ Lefschetz fibration $X$. Let $n$ and $s=s_1+s_2$ be the numbers of nonseparating and separating vanishing cycles, respectively. Hence $n+s<15$.

The equation in Lemma~\ref{lem1} turns out to be
\[
n+12s_1-4s_2 \equiv 0 \pmod{44}
\]
so that $n$ is divided by $4$. It is known that $n\geq8$ ~\cite{BK.1}. The signature and the Euler characteristic are computed as
\[
  \sigma(X)=\dfrac{-6n+5s_1+13s_2}{11}
  \]
  and 
  \[
  e(X)=4-4g+n+s=-16+n+s_1+s_2,
  \]
  respectively. Hence the possible values of  $(n,s_1,s_2)$, $e(X)$, $\sigma(X)$, $c_{1}^{2}(X)$ and $\chi_h(X)$ are as follows:
\begin{center}
\begin{tabular}{ r|c|c|c|c|c| }
\multicolumn{1}{r}{}
 &  \multicolumn{1}{c}{$(n,s_1,s_2)$}
 & \multicolumn{1}{c}{$e(X)$} 
 & \multicolumn{1}{c}{$\sigma(X)$} 
 & \multicolumn{1}{c}{$c_{1}^{2}(X)$}
 & \multicolumn{1}{c}{$\chi_h(X)$}  \\
\cline{2-6}
(c$1$) & $(8,0,2)$ &$ -6$&$-2$&$-18$&$-2$ \\
\cline{2-6}
(c$2$) & $(8,3,0)$  &$ -5$ &$-3$&$-19$&$-2$\\
\cline{2-6}
(c$3$) & $(8,1,5)$  & $-2$ &$2$&$2$&$0$\\
\cline{2-6}
\end{tabular}
\end{center}
\vspace*{0.4cm}
We now eliminate all cases:

Cases (c$1$) and (c$2$). In these cases, $c_{1}^{2}(X)< 4-4g=-16$. This is impossible~\cite{St.2}.

Case (c$3$). In this case, $\sigma(X)> n-s-4$, which is also impossible 
for hyperelliptic Lefschetz fibrations~\cite{Oz.1}. Therefore, $N_{5}$ cannot be less than $15$.

Next, we will prove that $N_{6}=16$. Suppose that $N_{6}<16$ so that we have a hyperelliptic genus-$6$ Lefschetz fibration $X$ with $n+s<16$, where $s=s_1+s_2+s_3$. Using arguments similar to the above, we have 
the possible values of $(n,s_1,s_2,s_3)$, $e(X)$, $\sigma(X)$ and $ c_{1}^{2}(X)$ are as follows:
  \begin{center}
\begin{tabular}{ r|c|c|c|c| }
\multicolumn{1}{r}{}
 &  \multicolumn{1}{c}{$(n,s_1,s_2,s_3)$}
 & \multicolumn{1}{c}{$e(X)$} 
 & \multicolumn{1}{c}{$\sigma(X)$} 
 & \multicolumn{1}{c}{$c_{1}^{2}(X)$}  \\
\cline{2-5}
(d$1$) & $(10,0,3,0)$ & $-7$&$-1$&$-17$\\
\cline{2-5}
(d$2$) & $(10,3,0,1)$  &$ -6$ &$-2$&$-18$\\
\cline{2-5}
(d$3$) & $(10,2,0,3)$  & $-5$ &$1$&$-7$\\
\cline{2-5}
(d$4$) & $(10,1,4,0)$  & $-5$ &$1$&$-7$\\
\cline{2-5}
(d$5$) & $(12,0,1,0)$  &$ -7$ &$-5$&$-29$\\
\cline{2-5}
(d$6$) & $(12,1,2,0)$  & $-5$ &$-3$&$-19$\\
\cline{2-5}
(d$7$) &$ (14,1,0,0)$  & $-5$ &$-7$&$-31$\\
\cline{2-5}
\end{tabular}
\end{center}
\vspace*{0.4cm}
We now eliminate all cases:

Cases (d$5$) and (d$7$). In these cases, $c_1^2(X) < 4-4g=-22$. This is a contradiction~\cite{St.2}.

Cases (d$1$), (d$2$) and (d$6$). In these cases, $c_{1}^{2}<2-2g=-10$. Hence, $X$ is a blow-up of a rational or ruled surface~\cite{Li.1}. Thus, $b_{2}^{+}(X)=1$. However, this contradicts to Theorem~\ref{t1}.

Cases (d$3$) and (d$4$). In these cases, we have the following identities:
\begin{eqnarray}
  \sigma(X)&=&b_{2}^{+}(X)-b_{2}^{-}(X)=1, \label{heq5}\\
  e(X)&=&2-2b_1(X)+b_{2}^{+}(X)+b_{2}^{-}(X)=-5.\label{heq6}
  \end{eqnarray}

 So, the equations ($\ref{heq5}$) and ($\ref{heq6}$) yield 
\begin{eqnarray}
  b_{2}^{+}(X)&=&b_{1}(X)-3, \label{heq7}\\
   b_{2}^{-}(X)&=&b_{1}(X)-4.\label{heq8}
  \end{eqnarray}
Observe that $X$ cannot be a rational surface because $b_{1}(X)=4>0$ as $b_2^{+}=1$. Also, $X$ is not a blow-up of a ruled surface, since ruled surfaces have $\sigma\leq 0$. Let $\widetilde{X}$ be the minimal model of $X$ so that $X\cong\widetilde{X}\#k\overline{\mathbb{CP}}^{2}$ for some non-negative integer $k$. Due to Liu~\cite{Liu} and Taubes~\cite{Tau},  $c_{1}^{2}(\widetilde{X})\geq0$. Also, the equation 
\begin{eqnarray*}
c_{1}^{2}(\widetilde{X})=c_{1}^{2}(X)+k=-7+k
  \end{eqnarray*}
 implies that $k\geq7$. It is known that $b_{2}^{-}(X)\geq k \geq7$. The identity (\ref{heq8}) gives rise to $b_{1}(X)\geq 11$. Since $b_{1}(X)\leq 2g-1=11$  by the theory of Lefcshetz fibrations, we have $b_{1}(X)=11$. However, this contradicts with the result of \cite[Lemma 2.5]{Li.1}. Hence $N_{6}$ cannot be less than $16$. Since there exists a genus-$6$ hyperelliptic Lefschetz fibration with $16$ singular fibers~\cite{c,dmp,k1}, we have $N_{6}=16$.\par
For $8\leq g \leq10$, we list all possible values of the numbers $n$ and $s$ (the remaining details for these cases follow similarly from the above arguments). One can list these numbers using the congruence in Lemma~\ref{lem1} and the inequality  $n\geq(8g-3)/5$ ~\cite{BK.1}.\par
For $g=8$, the possible values of $(n,s_1,s_2,s_3,s_4), e(X)$, $\sigma(X)$ and $c_{1}^{2}(X)$ are as follows:

\begin{center}
\begin{tabular}{ r|c|c|c|c| }
\multicolumn{1}{r}{}
 &  \multicolumn{1}{c}{$(n,s_1,s_2,s_3,s_4)$}
 & \multicolumn{1}{c}{$e(X)$} 
 & \multicolumn{1}{c}{$\sigma(X)$} 
 & \multicolumn{1}{c}{$c_{1}^{2}(X)$}
 \\
\cline{2-5}
(e$1$) & $(14,1,0,0,1)$ & $-12$&$-4$&$-36$ \\
\cline{2-5}
(e$2$) & $(14,0,2,0,1)$  &$ -11$ &$-1$&$-25$\\
\cline{2-5}
(e$3$) & $(14,0,1,3,0)$  & $-10$ &$2$&$-14$\\
\cline{2-5}
(e$4$) & $(16,1,1,0,0)$  &$ -10$ &$-6$&$-38$\\
\cline{2-5}
(e$5$) & $(14,0,1,2,2)$  & $-9$ &$5$&$-3$\\
\cline{2-5}
\end{tabular}
\end{center}
\vspace*{0.4cm}
Using arguments similar to those arguments, we can eliminate all possibilities except for the case (e$5$). Thus, we can conclude that $N_{8}=19$ or $20$.\par       
For $g=9$, the possible values of $(n,s_1,s_2,s_3,s_4), e(X), \sigma(X)$ and $c_{1}^{2}(X)$ are as follows:
 \begin{center}
\begin{tabular}{ r|c|c|c|c| }
\multicolumn{1}{r}{}
 &  \multicolumn{1}{c}{$(n,s_1,s_2,s_3,s_4)$}
 & \multicolumn{1}{c}{$e(X)$} 
 & \multicolumn{1}{c}{$\sigma(X)$} 
 & \multicolumn{1}{c}{$c_{1}^{2}(X)$}
 \\
\cline{2-5}
(f$1$) & $(16,0,0,0,2)$ &$ -14$&$-2$&$-34$ \\
\cline{2-5}
(f$2$) & $(16,1,1,1,0)$  & $-13$ &$-3$&$-35$\\
\cline{2-5}
(f$3$) & $(16,0,0,1,3)$  & $-12$ &$4$&$-12$\\
 \cline{2-5}
(f$4$) & $(16,5,0,0,0)$  & $-11$&$-5$&$-37$\\
\cline{2-5}
(f$5$) & $(16,1,1,2,1)$  & $-11$ &$3$&$-13$\\
 \cline{2-5}
(f$6$) & $(16,0,3,2,0)$  & $-11$ &$3$&$-13$\\
\cline{2-5}
(f$7$) & $(16,3,1,0,2)$  & $-10$ &$2$&$-14$\\
\cline{2-5}
\end{tabular}
\end{center} 
\begin{center}
\begin{tabular}{ r|c|c|c|c| }
\multicolumn{1}{r}{}
 &  \multicolumn{1}{c}{$(n,s_1,s_2,s_3,s_4)$}
 & \multicolumn{1}{c}{$e(X)$} 
 & \multicolumn{1}{c}{$\sigma(X)$} 
 & \multicolumn{1}{c}{$c_{1}^{2}(X)$}
 \\
\cline{2-5}
(f$8$) & $(16,3,0,3,0)$  & $-10$ &$2$&$-14$\\
\cline{2-5}
(f$9$) & $(16,2,3,0,1)$  & $-10$ &$2$&$-14$\\
\cline{2-5}
(f$10$) & $(16,1,5,0,0)$  & $-10$ &$2$&$-14$\\
\cline{2-5}
(f$11$) & $(16,0,0,2,4)$  &$ -10$ &$10$&$10$\\
\cline{2-5}
(f$12$) & $(16,5,0,1,1)$  &$ -9$ &$1$&$-15$\\
\cline{2-5}
(f$13$) &$ (16,4,2,1,0)$  &$ -9$ &$1$&$-15$\\
\cline{2-5}
(f$14$) & $(16,2,0,0,5)$  &$ -9$ &$9$&$9$\\
\cline{2-5}
(f$15$) & $(16,1,2,0,4)$  &$ -9$ &$9$&$9$\\
\cline{2-5}
(f$16$) &$ (16,1,1,3,2)$  &$ -9$ &$9$&$9$\\
\cline{2-5}
(f$17$) & $(16,1,0,6,0)$  & $-9$ &$9$&$9$\\
\cline{2-5}
(f$18$) & $(16,0,4,0,3)$  & $-9$ &$9$&$9$\\
\cline{2-5}
(f$19$) & $(16,0,3,3,1)$  & $-9$ &$9$&$9$\\
\cline{2-5}
(f$20$) & $(20,0,1,2,0)$  & $-9$ &$-3$&$-27$\\
\cline{2-5}
\end{tabular}
\end{center} 
\vspace*{0.4cm}
Using arguments similar to those above, we can eliminate all possibilities (f$1$)$-$(f$20$) such that $n+s<24$. Thus, we can conclude that $N_{9}\geq 24$.\par  
For $g=10$, the possible values of $(n,s_1,s_2,s_3,s_4,s_5)$, $e(X)$, $\sigma(X)$ and $c_{1}^{2}(X)$ are as follows: 
	\begin{center}
\begin{tabular}{ r|c|c|c|c| }
\multicolumn{1}{r}{}
 &  \multicolumn{1}{c}{$(n,s_1,s_2,s_3,s_4,s_5)$}
 & \multicolumn{1}{c}{$e(X)$} 
 & \multicolumn{1}{c}{$\sigma(X)$} 
 & \multicolumn{1}{c}{$c_{1}^{2}(X)$}
  \\
\cline{2-5}
(g$1$) & $(16,0,1,0,1,1)$ & $-17$&$1$&$-31$ \\
\cline{2-5}
(g$2$) & $(16,1,2,0,1,0)$  &$ -16$ &$0$&$-32$\\
\cline{2-5}
(g$3$) & $(16,0,1,1,1,1)$  &$ -16$ &$4$&$-20$\\
\cline{2-5}
(g$4$) & $(16,1,2,1,1,0)$  & $-15$&$3$&$-21$\\
\cline{2-5}
(g$5$) & $(16,1,0,0,2,2)$  &$ -15$ &$7$&$-9$\\
\cline{2-5}
(g$6$) & $(16,0,2,0,0,3)$  & $-15$ &$7$&$-9$\\
\cline{2-5}
(g$7$) & $(16,0,1,2,1,1)$  &$ -15$ &$7$&$-9$\\
\cline{2-5}
(g$8$) & $(16,4,0,0,0,2)$  & $-14$ &$2$&$-22$\\
\cline{2-5}
(g$9$) & $(16,2,1,0,2,1)$  & $-14$ &$6$&$-10$\\
\cline{2-5}
(g$10$) & $(16,1,3,0,0,2)$  &$ -14$ &$6$&$-10$\\
\cline{2-5}
(g$11$) & $(16,1,2,2,1,0)$  &$ -14$ &$6$&$-10$\\
\cline{2-5}
(g$12$) & $(16,1,0,1,2,2)$  & $-14$ &$10$&$2$\\
\cline{2-5}
(g$13$) & $(16,0,2,1,0,3)$  & $-14$ &$10$&$2$\\
\cline{2-5}
(g$14$) & $(16,0,2,0,4,0)$  & $-14$ &$10$&$2$\\
\cline{2-5}
(g$15$) & $(16,0,0,0,1,5)$  &$ -14$ &$14$&$14$\\
\cline{2-5}
(g$16$) & $(18,2,0,0,0,0)$  & $-16$ &$-8$&$-56$\\
\cline{2-5}
(g$17$) & $(18,2,0,1,0,0)$  &$ -15$ &$-5$&$-45$\\
\cline{2-5}
(g$18$) & $(18,2,0,2,0,0)$  &$ -14$ &$-2$&$-34$\\
\cline{2-5}
(g$19$) & $(18,1,0,0,3,0)$  & $-14$ &$2$&$-22$\\
\cline{2-5}
(g$20$) & $(18,0,2,0,0,1)$  & $-14$ &$2$&$-22$\\
\cline{2-5}
(g$21$) & $(20,1,0,0,0,1)$  & $-14$ &$-6$&$-46$\\
\cline{2-5}
(g$22$) &$ (18,0,0,0,2,3)$  & $-13$ &$9$&$1$\\
\cline{2-5}
\end{tabular}
\end{center}
\vspace*{0.4cm}
Using arguments similar to those above arguments, one can eliminate all possibilities except for the case (g$22$). Thus, one can conclude that $N_{10}=23$ or $24$.      
\qed 
\vspace*{2.5mm}
 \par As long as genus-$g$ increases, the number of possibilities of $n$ and $s$ increases, where $n$ and $s$ are the numbers of irreducible and reducible fibers, respectively. Hence, it is hard to find the exact value of $N_{g}$. The odd case is harder because of the upper bound $8g+4$ of $N_{g}$. For the general case we have the following:
%%%%%%%%%%%%%%%%%%%%%%%%%%%%%%%%%%%%%%%%%%%%%%%
\begin{proposition}\label{pr}
Let $f\colon X\to \mathbb{S}^2$ be a genus-$g$ Lefschetz fibration with $n+s<2g+4$ and $g>6$. Then the signature of $X$, $\sigma(X)$, is positive.
\end{proposition}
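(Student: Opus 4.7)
My plan is to assume $\sigma(X)\leq 0$ and derive a contradiction by splitting into two cases based on $b_2^+(X)$. When $b_2^+(X)=1$, I would apply Theorem~\ref{t1}: part~(1) for even $g\geq 6$, part~(3) for odd $g\geq 9$, and Remark~\ref{rem1} to cover $g=7$. In every case the conclusion is $n+s\geq 2g+4$, contradicting the hypothesis.

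The main work would be the case $b_2^+(X)\geq 2$. Since any nontrivial Lefschetz fibration over $\mathbb{S}^2$ endows $X$ with a symplectic structure (Gompf), the minimal model $\widetilde{X}$ is a closed minimal symplectic $4$-manifold with $b_2^+(\widetilde{X})=b_2^+(X)\geq 2$, so Taubes' theorem gives $c_1^2(\widetilde{X})\geq 0$. Writing $X\cong \widetilde{X}\#k\overline{\mathbb{CP}^2}$, one gets $c_1^2(X)=c_1^2(\widetilde{X})-k$ and $b_2^-(X)\geq k$, so
$$
b_2^-(X)\;\geq\;-c_1^2(X)\;=\;-\bigl(2e(X)+3\sigma(X)\bigr).
$$
Substituting $\sigma(X)=b_2^+(X)-b_2^-(X)$ rewrites this as $b_2^-(X)\leq e(X)+\tfrac{3}{2}b_2^+(X)$, and combining with $b_2^+(X)\leq b_2^-(X)$ (from $\sigma(X)\leq 0$) produces the lower bound $b_2^+(X)\geq -2e(X)$.

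To close the argument I would use the handlebody bound $b_1(X)\leq 2g-1$ already invoked in the preliminaries:
$$
b_2^+(X)+b_2^-(X)\;=\;e(X)-2+2b_1(X)\;\leq\;e(X)+4g-4.
$$
Since $b_2^+(X)\leq b_2^-(X)$, this gives $b_2^+(X)\leq \tfrac{1}{2}(e(X)+4g-4)$; combined with $b_2^+(X)\geq -2e(X)$, it yields $e(X)\geq\tfrac{4-4g}{5}$. But the hypothesis $n+s<2g+4$ forces $e(X)=4-4g+n+s\leq 7-2g$, and a direct check shows that $\tfrac{4-4g}{5}>7-2g$ precisely when $g\geq 6$; for $g>6$ the two bounds on $e(X)$ are therefore incompatible, which is the desired contradiction. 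The main obstacle is the correct use of Taubes' nonnegativity to turn the unknown number of blow-ups $k$ into the clean inequality $b_2^-(X)\geq -c_1^2(X)$; once this is in hand the rest is linear bookkeeping on Betti numbers, signature, and Euler characteristic.
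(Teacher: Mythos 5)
Your argument is correct, and while the first step coincides with the paper's, the heart of it is genuinely different. The paper does not argue by contradiction at all: after using Theorem~\ref{t1} (plus Remark~\ref{rem1} for $g=7$) to rule out $b_2^+(X)=1$, it concludes that $X$ is not a blow-up of a rational or ruled surface and invokes Li's symplectic Parshin--Arakelov inequality $c_1^2(X)\geq 2-2g$; combining this with $e(X)=4-4g+n+s\leq 7-2g$ gives $3\sigma(X)\geq 2g-12>0$ in three lines, so it even yields the quantitative bound $\sigma(X)\geq (2g-12)/3$. You instead assume $\sigma(X)\leq 0$, handle $b_2^+(X)=1$ the same way, and in the case $b_2^+(X)\geq 2$ replace Li's inequality by Taubes' nonnegativity $c_1^2(\widetilde{X})\geq 0$ for the minimal model, the blow-up formulas $c_1^2(X)=c_1^2(\widetilde{X})-k$, $b_2^-(X)\geq k$, and the handlebody bound $b_1(X)\leq 2g-1$; your linear bookkeeping ($b_2^-\geq -c_1^2(X)$, hence $b_2^+\geq -2e(X)$, versus $2b_2^+\leq e(X)+4g-4$) forces $e(X)\geq (4-4g)/5$, which indeed contradicts $e(X)\leq 7-2g$ exactly for $g\geq 6$, so all steps check out (note also that Gompf's symplectic structure already guarantees $b_2^+(X)\geq 1$, so your two cases are exhaustive). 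What your route buys is self-containedness at the level of citations -- you essentially re-derive the weak form of Li's inequality needed here from Taubes plus Betti-number counting -- at the cost of a longer argument, a proof by contradiction, and only the qualitative conclusion $\sigma(X)>0$; the paper's route is shorter, avoids the $b_1$ bound, and gives an explicit lower bound on $\sigma(X)$ that grows with $g$.
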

\begin{proof}
Suppose that $X$ admits a genus-$g$ Lefschetz fibration with $n+s<2g+4$ for $g>6$ and $g\neq 7$. It follows from Theorem~\ref{t1} that $b_2^{+}\neq1$ and therefore $X$ is not a blow-up of a rational or ruled surface. This gives $c_1^{2}(X)\geq 2-2g$ by ~\cite{Li.1}. Therefore we get:
\begin{eqnarray*}
2-2g \leq c_1^{2}(X)&=&3\sigma(X)+2e(X)\\
&=&3\sigma(X)+2(4-4g+n+s)\\
&\leq & 3\sigma(X)+2(4-4g+2g+3)\\
&=&3\sigma(X)+14-4g,
\end{eqnarray*}
 which implies that $\sigma(X)>0$ when $g> 6$ and $g\neq7$.\par
 We see that the same argument holds for $g=7$ using Remark~\ref{rem1}.
 \end{proof} 

\begin{remark}
 Proposition~\ref{pr} implies that every hyperelliptic genus-$g$ 
Lefschetz fibration with $n+s<2g+4$ and $g>6$ has 
$b_1(X)>\dfrac{8g-15}{6}$ by applying the inequality (\ref{eq2}). However, 
the existence of such a Lefschetz fibration is not known~\cite{b1, Sm}.
\end{remark}
\begin{remark}
 Recently, Korkmaz has constructed a factorization of the identity in the
hyperelliptic mapping class group $\hmod_{g}$ with length $5g-3$.  
This new construction provides us to improve the upper bound of $N_{g}$ 
when $g$ is odd. Therefore, we conclude that $N_{g}\leq 2g+4$ if $g$ is even and $N_{g}\leq 5g-3$ if $g$ is odd.
\end{remark}

%%%%%%%%%%%%%%%%%%%%%%%%%%%%%%%%%%%%%%%%%%%%%%%%%%%%%%%%%%%%%%%%%%%%


\begin{thebibliography}{xxxx}
\bibitem{a}T. Altun{\"{o}}z: \emph{ Exotic $4$-manifolds and hyperelliptic Lefschetz fibrations}, Ph.D. thesis, Middle
East Technical University, Ankara, Turkey, 2018.

\bibitem{ba}W. Barth, K. Hulek, C. Peters and A. Van de Ven:   Compact Complex Surfaces, 2nd edn. Springer, Heidelberg, 2004.

\bibitem{b1}R. I. Baykur: \emph{ Small symplectic Calabi-Yau surfaces and exotic $4$-manifolds via genus-$3$ pencils},
preprint, arXiv:1511.05951.
\bibitem{bk} R. I. Baykur and M. Korkmaz:
\emph{ Small Lefschetz fibrations and exotic $4$-manifolds},
Math. Ann. {\bf{367}} (3-4), (2017), 1333--1361.


\bibitem{bh} J. S. Birman and H. Hilden:  \emph{ On the mapping class groups of closed surfaces as covering spaces}, Advances in the theory of Riemann surfaces. Ann. Math. Stud. {\bf 66} (1971), 81--115.

\bibitem{BK.1} V. Braungardt, D. Kotschick:  \emph{ Clustering of critical points in Lefschetz fibrations and the symplectic Szpiro Inequality}, Trans. Amer. Math. Soc. {\bf 355} (8) (2003), 3217--3226.

 \bibitem{c} C. Cadavid:
\emph{ A remarkable set of words in the mapping class group},
 Ph.D dissertation, Univ. of Texas, Austin, 1998.

 \bibitem{dmp} E. Dalyan, E. Medeto\^{g}ullari, M. Pamuk:
\emph{ A note on the generalized Matsumoto relation},
 Turk. J. Math. {\bf 41}(2017), 524--536.

\bibitem{Don.1} S. K. Donaldson:  \emph{ Lefschetz fibrations in symplectic geometry}, Doc. Math. J. DMV.
Extra Volume, ICMII (1998), 309--314.

\bibitem{Don.2} S. K. Donaldson: \emph{ Lefschetz pencils on symplectic manifolds}, J. Differential Geom. {\bf 53} (2) (1999), 205--236. 
\bibitem{E} H. Endo: \emph{ Meyer’s signature cocycle and hyperelliptic fibrations}, Math. Ann. {\bf 316} (2000), 237--257.



 
\bibitem{Go} R. E. Gompf:\emph{ The topology of symplectic manifolds}, Turkish J. Math. {\bf 25} (2001), 43--59. 
\bibitem{GS} R. E. Gompf and A. I. Stipsicz: $4$-manifolds and Kirby calculus, Graduate Studies in Mathematics, vol. 20, American Math. Society, Providence 1999.

\bibitem{Ha} N. Hamada: \emph{ Upper bounds for the minimal number of singular fibers in a Lefschetz fibration
over the torus}, Michigan Math. J. {\bf 63} (2) (2014), 275--291. 


\bibitem{Kne.1} H. Kneser: \emph{ Die kleinste Bedeckungszahl innerhalb einer Klasse von Fl\"a chenabbildungen}, Math. Ann. {\bf 103} (1930), 347--358. 


 \bibitem{k1} M. Korkmaz:
\emph{ Noncomplex smooth $4$-manifolds with Lefschetz fibrations},
 Internat. Math. Res. Notices {\bf 2001} (2001) (3), 115--128.
 
 \bibitem{k2} M. Korkmaz:
\emph{ Low-dimensional homology groups of mapping class groups: a survey}, Turk. J. Math. {\bf 26} (1) (2002), 101--114.
 
\bibitem{Ko} M. Korkmaz and B. Ozbagci: \emph{ Minimal number of singular fibers in a Lefschetz fibration}, Proc.
Amer. Math. Soc. {\bf 129} (5) (2001), 1545--1549. 



\bibitem{Ks} M. Korkmaz and A. Stipsicz: \emph{ Lefschetz fibrations on $4$-manifolds}, Handbook of Teichm\"{u}ller
theory. Vol. II, IRMA Lect. Math. Theor. Phys., vol. 13, Eur. Math. Soc., Z¨{u}rich, 2009, pp. 271--296. 


\bibitem{Li.1}  T.-J. Li:\emph{ Symplectic Parshin-Arakelov inequality}, Internat.Math. Res. Notices {\bf 2000} (18) (2000), 941--954.

\bibitem{Liliu}  T.-J. Li and A. Liu: \emph{ Symplectic structure on ruled surfaces and a generalized adjunction formula}, Math. Res. Lett. {\bf 2} (4) (1995), 453--471.
\bibitem{Liu} A. Liu: \emph{ Some new applications of the general wall crossing formula}, Math. Res. Lett.
{\bf 3} (1996), 569--585.
 \bibitem{M1} Y. Matsumoto:\emph{ On $4$-manifolds fibered by tori II}, Proc. Japan Acad., {\bf 59A} (1983),100--103.
 
 \bibitem{M2} Y. Matsumoto:\emph{ Lefschetz fibrations of genus two — a topological approach}, Proceedings of
the $37$th Taniguchi Symposium on Topology and Teichm{\"{u}}ller Spaces, ed. Sadayoshi
Kojima et al., World Scientific (1996), 123--148, 
 
 \bibitem{M} N. Monden: \emph{ On minimal number of singular fibers in a genus-$2$ Lefschetz fibration}, Tokyo J.
Math. {\bf 35} (2) (2012), 483--490. 

 \bibitem{Oz.1} B. Ozbagci:\emph{ Signatures of Lefschetz fibrations}, Pacific J. Math. {\bf 202} (1) (2002), 99--118. 
 \bibitem{Sm} I. Smith: \emph{ Torus fibrations on symplectic four-manifolds}, Turk. J. Math. {\bf 25} (2001), 69--95.
\bibitem{St.2} A. Stipsicz:\emph{ On the number of vanishing cycles in Lefschetz fibrations}, 
Math. Res. Lett.
{\bf 6} (3-4) (1999), 449--456.
\bibitem{St.1} A. Stipsicz:\emph{ Singular fibers in Lefschetz fibrations on manifolds with $b_{2}^{+}= 1$}, Topology
Appl. {\bf 117} (1) (2002),  9--21.
\bibitem{StY} A. Stipsicz and K.-H. Yun:\emph{ On minimal number of singular fibers in Lefschetz fibrations over the torus}, Proc. Amer. Math. Soc. {\bf 145} (8) (2017), 3607--3616.
\bibitem{Tau} C. Taubes: \emph{SW =$\Longrightarrow Gr:$ From the 
Seiberg-Witten equations to pseudo-holomorphic curves}, J. Amer. Math. Soc. {\bf 9} (1996), 845--918.
\bibitem{X} G. Xiao: Surfaces fibr\'{e}e en courbes de genre deux, Lecture Notes in Mathematics, {\bf 1137} Springer-Verlag, Berlin, 1985.





\end{thebibliography}
\end{document}